\def\sideremark#1{\ifvmode\leavevmode\fi\vadjust{\vbox to0pt{\vss
 \hbox to 0pt{\hskip\hsize\hskip1em
 \vbox{\hsize3cm\tiny\raggedright\pretolerance10000
 \noindent #1\hfill}\hss}\vbox to8pt{\vfil}\vss}}}
\newtheorem{theorem}{Theorem}[section]
\newtheorem{proposition}[theorem]{Proposition}
\newtheorem{corollary}[theorem]{Corollary}
 \theoremstyle{definition}
 \newtheorem{definition}[theorem]{Definition}
 \theoremstyle{remark}
 \newtheorem{remark}[theorem]{Remark}
\numberwithin{equation}{section}
\begin{document}

\title[First Eigenvalue Estimates]{First Eigenvalue Estimates for Asymptotically Hyperbolic Manifolds and their Submanifolds}
\author{Samuel P\'erez-Ayala}
\address{370 Lancaster Ave \\Haverford College  \\ Haverford\\PA 19041\\ USA}
\email{sperezayal@haverford.edu}
\author{Aaron J. Tyrrell}
\address{18A Department of Mathematics and Statistics \\ Texas Tech University \\ Lubbock \\ TX 79409 \\ USA}
\email{aatyrrel@ttu.edu}

\subjclass[2020]{}
\begin{abstract}
 We derive a sharp upper bound for the first eigenvalue $\lambda_{1,p}$ of the $p$-Laplacian on asymptotically hyperbolic manifolds for $1<p<\infty$. We then prove that asymptotically CMC submanifolds within asymptotically hyperbolic manifolds are themselves asymptotically hyperbolic. As a corollary, we show that for any minimal conformally compact submanifold $Y^{k+1}$ within $\mathbb{H}^{n+1}(-1)$, $\lambda_{1,p}(Y)=\left(\frac{k}{p}\right)^{p}$. We then obtain lower bounds on $\lambda_{1,2}(Y)$ in the case where minimality is replaced with a bounded mean curvature assumption and where the ambient space is a general Poincar\'e-Einstein space whose conformal infinity is of non-negative Yamabe type. In the process, we introduce an invariant $\hat \beta^Y$ for each such submanifold, enabling us to generalize a result due to Cheung-Leung in \cite{CheungLeung-Fu2001Eefs}.
\end{abstract}

\maketitle
%\section{Estimates for the first Dirichlet Eigenvalue on Asymptotically Hyperbolic Manifolds}

\section{Introduction}

Let $\overline{X}^{n+1}$ be a compact manifold with boundary and interior $X$. A complete Riemannian metric $g_+$ on $X$ is said to be conformally compact (CC) of order $C^{k,\alpha}$ if $\overline{g} := r^2g_+$ extends as a metric to $\overline{X}$ which is $C^{k,\alpha}(\overline{X})$, where $r$ is a defining function for $M := \partial \overline{X}.$ We will assume this extension is $C^{3,\alpha}(\overline{X}),$ however some of our results hold under weaker assumptions. By a defining function for the boundary $M$ we mean
\begin{align*} r &> 0 \hspace{2mm}\text{on $X,$} \\
  r &= 0 \hspace{2mm}\text{on $M,$}\\ dr &\neq 0 \hspace{2mm}\text{on $M.$}
 \end{align*} 
If $\rho$ is another defining function for $M$, then the induced boundary metrics $(r^2g_+)|_{TM}$ and $(\rho^2g_+)|_{TM}$ are conformal to each other, giving rise to an invariant $[g_+]_{\infty}:=[\overline{g}|_{TM}]$ of $g_+$ known as the conformal infinity of $(X^{n+1},g_+)$. Here  $[\overline{g}|_{TM}]$ denotes the conformal class of $\overline{g}|_{TM}$. As observed by Mazzeo in \cite{mazzeo1986hodge}, conformally transforming the Riemann tensor of $g_+=:g$ yields
\begin{equation}\label{TransformationRiemannTensor}
 R_{ijkl}^{g_+}=-|dr|^2_{\overline{g}}(g_{ik}g_{jl}-g_{il}g_{jk})+O_{ijkl}(r^{-3}), 
 \end{equation}
 where $O_{ijkl}(r^{-3})$ is the component function of a covariant 4-tensor which is $O(r^{-3})$ as $r\to 0$. Notice that since $(g_+)^{ij}=r^2\overline{g}^{ij}$, this implies that the tensor corresponding to those components vanishes with respect to the $g_+$- norm as $r\to 0^+$. Moreover, (\ref{TransformationRiemannTensor}) also gives us that the sectional curvatures of $(X^{n+1},g_+)$ approach $-|dr|^2_{\overline{g}}$ at the boundary $M$. Hence, $ |dr|_{r^2g_+}^2$ restricted to $M$ is another invariant of the metric $g_+$ (see Section 2 in \cite{Graham}).
 
Asymptotically hyperbolic (AH) manifolds are conformally compact manifolds that have asymptotic sectional curvatures equal to a constant $-\kappa^2$, where $\kappa >0$; unless otherwise stated, we will assume this constant to be $-1$. We denote them by AH$(-\kappa^2)$, or just AH if $-\kappa^2=-1$. Note that, based on (\ref{TransformationRiemannTensor}), this constant $-\kappa^2$ equals $-|dr|^2_{\overline{g}}|_M$.

A special defining function for $M$ is a defining function $r$ such that  $|dr|_{\overline{g}}^2\equiv 1$ in a neighborhood of $M$ rather than just on $M$ itself. It is a lemma in Graham-Lee's work \cite{GrahamLee} (see also Lemma 2.1 in \cite{Graham}) that for every metric $g_{\infty}$ in the conformal infinity of an asymptotically hyperbolic manifold $(X^{n+1},g_+)$, there exists a special defining function $r$ such that $(r^2g_+)|_{TM}=g_{\infty}.$ A conformally compact manifold that also satisfies the Einstein condition $\mathrm{Ric}(g_+)=-ng_+$ is called a Poincar\'e-Einstein (PE) manifold 
or conformally compact Einstein manifold. For such a manifold, it can be verified by contracting \eqref{TransformationRiemannTensor} that $\left(|dr|_{\overline{g}}^2\right)|_M=1$, thus implying that Poincar\'e-Einstein manifolds are special cases of asymptotically hyperbolic manifolds. In summary,
 \[
 \{\text{PE}\}\subset \{\text{AH}\}\subset \{\text{CC}\}.
 \]

The model case of a conformally compact Riemannian manifold is the hyperbolic space $\mathbb{H}^{n+1}(-1)$. We use the Poincar\'e ball model:
\begin{equation}\label{HyperbolicBall}
    (\mathbb{H}^{n+1},g_H) := \left(\mathbb{B}^{n+1}, \left(\frac{2}{1 - |y|^2}\right)^2\cdot\sum_{i=1}^{n+1}(dy_i)^2\right),
\end{equation}
where $\mathbb{B}^{n+1}\subseteq \mathbb{R}^{n+1}$ denotes the standard open unit ball in Euclidean space. The functions $r_1,r_2:\mathbb{B}^{n+1}\rightarrow \mathbb{R}^{n+1}$, defined as $r_1(y) = 1 - |y|$ and $r_2(y) = \frac{1-|y|}{1+|y|}$, where $|\cdot|$ denotes the standard euclidean norm, are examples of defining functions for $\partial \mathbb{B}^{n+1} = \mathbb{S}^n$ with both $r_1^2g_H$ and $r_2^2g_H$ extending to metrics on $\overline{ \mathbb{B}}^{n+1}$. As discussed in \cite{Graham}, $r_2$ is an example of a special defining function. Moreover, $[g_H]_\infty = [(r_2^2g_H)|_{T\mathbb{S}^n}] = [(\text{standard metric on $\mathbb{S}^n$})]$ is the conformal infinity of $(\mathbb{H}^{n+1},g_H)$. Finally, $g_H$ has constant sectional curvatures equal to $-1$, and so $\mathrm{Ric}(g_H)=-ng_H$, which means that $(\mathbb{H}^{n+1}(-1),g_H)$ is also an example of a Poincar\'e-Einstein manifold.

We now proceed to introduce our main results. We begin with a discussion of upper bounds for Dirichlet eigenvalues on asymptotically hyperbolic manifolds, followed by an examination of general eigenvalue estimates on their submanifolds. All submanifolds $\iota: \overline{Y} \to \overline{X}$ are assumed to be immersed and we will often identify $\overline{Y}$ with $\iota(\overline{Y})$ when there is no ambiguity.

\subsection{Eigenvalue Estimates in Complete, non-Compact Manifolds}

Let $(X^{n+1},g_+)$ be a complete and non-compact manifold. For any bounded domain $\Omega$ with smooth boundary, we consider the Dirichlet eigenvalue problem
\begin{equation}\label{p-Equation}
    \begin{cases}
        \Delta_p f + \lambda |f|^{p-2}f = 0 & \quad\text{ in }\Omega,\\
        f = 0 & \quad\text{ on }\partial\Omega,
    \end{cases}
\end{equation}
where $\Delta_p f = \text{div}(|\nabla_{g_+}f|^{p-2}\nabla_{g_+}f)$ is the so-called $p$-Laplace operator and $1<p<\infty$. Notice that $\Delta_2=\Delta_{g_+}$ is linear as it is just the standard Laplace-Beltrami operator. Any $\lambda\in\mathbb{R}$ for which a nontrivial solution $f$ of (\ref{p-Equation}) exists is called a Dirichlet eigenvalue of $\Delta_p$ or simply a $p$-Dirichlet eigenvalue. Solutions to (\ref{p-Equation}) are not smooth, in general, and thus by ``solutions" we always mean a solution in the sense of distributions; see \cite{Lima} for a precise formulation. Our focus is on the first $p$-Dirichlet eigenvalue, whose variational characterization is given by
\begin{equation}\label{p-Eigenvalue}
    \lambda_{1,p}(\Omega) = \inf_{f\not = 0} \frac{\displaystyle\int_\Omega |\nabla_{g_+}f|^p\;dv_{g_+}}{\displaystyle\int_\Omega |f|^p\;dv_{g_+}}
\end{equation}
and where the infimum is taken over the space $W^{1,p}_o(\Omega)$ - the completion of $C^\infty_o(\Omega)$ under the Sobolev norm $\|\cdot\|_{L^{1,p}(g_+)}$. The reader can consult \cites{LEAn2006Epft,LindqvistPeter}, and references therein, for the definition of the first $p$-Laplace Dirichlet eigenvalue (\ref{p-Eigenvalue}) and basic properties of the associated eigenfunction. 

Due to domain monotonicity (Lemma 1.1 in \cite{DuMao}), it makes sense to define the first $p$-Dirichlet eigenvalue of $(X^{n+1},g_+)$ as follows:
\begin{definition}[First $p$-Dirichlet Eigenvalue]
    \begin{equation}
        \lambda_{1,p}(X) := \inf_{\Omega} \lambda_{1,p}(\Omega),
    \end{equation}
where the infimum is being taken over all bounded domains of $X$ with smooth boundary. 
\end{definition}
We emphasize that the definition of $\lambda_{1,p}(X)$ also depends on $g_+$, and so a better notation would be $\lambda_{1,p}(X,g_+)$. However, since $g_+$ will be fixed throughout the paper, we omit any reference to it and simply write $\lambda_{1,p}(X)$. In the case where $X= \mathbb{H}^{n+1}$, we will denote $g_+ = g_H$. When dealing with immersed submanifolds $\iota \colon Y^{k+1}\to X^{n+1}$, $\lambda_{1,p}(Y)$ is meant to be with respect to the induced metric $h_+ = {\iota}^*g_+$. 

The first $p$-Dirichlet eigenvalue is an invariant of $(X^{n+1},g_+)$ which is often difficult to compute. A lot of attention has been given to the case when $p=2$, that is, to the first Dirichlet eigenvalue of the Laplace-Beltrami operator, when $(X^{n+1},g_+)$ is assumed to be asymptotically hyperbolic. In \cite{MazzeoRafe1991UCaI}, Mazzeo showed that the essential spectrum of $\Delta_2 = \Delta_{g_+}$ consists of the ray $[\left(\frac{n}{2}\right)^2,\infty)$. As a consequence, he obtained the following upper bound: $\lambda_{1,2}(X) \le \left(\frac{n}{2}\right)^2$. For a general $p\in (1,\infty)$, upper bounds are known but under global assumptions on the Ricci curvature. In fact, on complete, $(n+1)$-dimensional manifolds $(X^{n+1},g_+)$ with Ricci curvature bounded from below, $\mathrm{Ric}(g_+)\ge -ng_+$,  classical techiniques due to Cheng \cite{ChengShiu-Yuen1975Ecta} yield $\left(\frac{n}{p}\right)^p$ as an upper bound for $\lambda_{1,p}(X)$; see \cites{SungChiung-JueAnna2014Sgea,HijaziOussama2020TCCo} for discussion on lower bounds under the same conditons on the Ricci curvature. This upper bound is sharp since $\lambda_{1,p}(\mathbb{H}^{n+1}(-1)) = \left(\frac{n}{p}\right)^p$, which will become clear in the discussion that follows. 

On an AH manifold $(X^{n+1},g_+)$, we argue that upper bounds on $\lambda_{1,p}(X)$ should only be influenced by the asymptotic behavior of the manifold at infinity, and that no global information should be needed to derive sharp estimates. Indeed, if $\Omega$ is a bounded domain with a smooth boundary in a collar neighborhood of $M=\partial \overline{X}$, then $\lambda_{1,p}(X)\le \lambda_{1,p}(\Omega)$. On the other hand, the sectional curvatures are uniformly approaching $-1$ near the boundary, suggesting that the geometry of $(X^{n+1},g_+)$ should reflect some properties of $\mathbb{H}^{n+1}$ near $M$. In particular, we expect $\lambda_{1,p}(\Omega)$ to be close to $\left(\frac{n}{p}\right)^p = \lambda_{1,p}(\mathbb{H}^{n+1}(-1))$. In our first theorem, we prove that this intuition is indeed correct by generalizing Mazzeo's upper estimate for any $p\in (1,\infty)$. 
\begin{theorem}\label{p-UpperBound}
    Let $(X^{n+1},g_+)$ be an asymptotically hyperbolic manifold. Then $\lambda_{1,p}(X)\le \left(\frac{n}{p}\right)^p$.
\end{theorem}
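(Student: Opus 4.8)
The plan is to bound $\lambda_{1,p}(X)$ from above purely in terms of the asymptotic geometry, using the variational characterization \eqref{p-Eigenvalue} together with $\lambda_{1,p}(X)=\inf_\Omega\lambda_{1,p}(\Omega)$. It suffices to produce, for every $\eta>0$, a bounded domain $\Omega$ with smooth boundary and a function $f\in W^{1,p}_o(\Omega)$ whose Rayleigh quotient is at most $\tfrac{1+\eta}{1-\eta}\left(\tfrac np\right)^p$; since $\lambda_{1,p}(X)\le\lambda_{1,p}(\Omega)\le$ this quotient, letting $\eta\to 0$ then gives the theorem. The whole construction lives in a collar of $M$.

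First I would fix a representative $g_\infty$ of the conformal infinity and invoke the Graham--Lee lemma to obtain a \emph{special} defining function $r$, so that on a collar $M\times[0,r_0)$ one has the normal form $\overline{g}=dr^2+g_r$ with $g_0=g_\infty$, hence $g_+=r^{-2}(dr^2+g_r)$ and $|dr|^2_{\overline{g}}\equiv 1$. Two elementary consequences drive everything. For a function of $r$ alone, $\nabla_{g_+}(\phi\circ r)=\phi'(r)\,\nabla_{g_+}r$ and $|\nabla_{g_+}r|^2_{g_+}=r^2|dr|^2_{\overline{g}}=r^2$, so $|\nabla_{g_+}(\phi\circ r)|_{g_+}=|\phi'(r)|\,r$. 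And, since $\dim X=n+1$, $dv_{g_+}=r^{-(n+1)}a(r,\cdot)\,dr\wedge dv_{g_\infty}$ where $a$ is smooth and positive with $a(0,\cdot)\equiv 1$; after shrinking $r_0$ we may assume $|a-1|<\eta$ on the collar. (Recall $M$ is compact, so $\mathrm{vol}(M,g_\infty)<\infty$.)

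Next I would take $f=\phi_\epsilon(r)$, a function of $r$ alone modeled on $\phi(r)=r^{n/p}$, which is the profile realizing the bound in the hyperbolic case: for $0<\epsilon<\delta<r_0$, let $\phi_\epsilon$ agree with $r^{n/p}$ on the bulk $[2\epsilon,\delta/2]$ and interpolate Lipschitz-continuously down to $0$ on $[\epsilon,2\epsilon]$ and on $[\delta/2,\delta]$, so that $f$ is Lipschitz with compact support in the bounded domain $\Omega_\epsilon:=\{\epsilon<r<\delta\}\cong M\times(\epsilon,\delta)$ and hence lies in $W^{1,p}_o(\Omega_\epsilon)$. Using the two facts above, the Rayleigh quotient of $f$ reduces to a one-dimensional quotient up to a factor bounded by $\tfrac{1+\eta}{1-\eta}$:
\[
\frac{\int_X|\nabla_{g_+}f|^p\,dv_{g_+}}{\int_X|f|^p\,dv_{g_+}}\ \le\ \frac{1+\eta}{1-\eta}\cdot\frac{\displaystyle\int_\epsilon^\delta|\phi_\epsilon'(r)|^p\,r^{\,p-n-1}\,dr}{\displaystyle\int_\epsilon^\delta|\phi_\epsilon(r)|^p\,r^{\,-n-1}\,dr}.
\]
On the bulk the two integrands are exactly $\left(\tfrac np\right)^p r^{-1}$ and $r^{-1}$, so those pieces contribute $\left(\tfrac np\right)^p\log\tfrac{\delta}{4\epsilon}$ and $\log\tfrac{\delta}{4\epsilon}$. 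On the transition layer $[\epsilon,2\epsilon]$ one has $|\phi_\epsilon|\lesssim\epsilon^{n/p}$ and $|\phi_\epsilon'|\lesssim\epsilon^{n/p-1}$, so its contributions are $O(\epsilon^{n-p})\int_\epsilon^{2\epsilon}r^{p-n-1}\,dr=O(1)$ and $O(\epsilon^{n})\int_\epsilon^{2\epsilon}r^{-n-1}\,dr=O(1)$, while the layer $[\delta/2,\delta]$ contributes an $\epsilon$-independent constant. Hence the right-hand side above equals $\tfrac{1+\eta}{1-\eta}\cdot\frac{(n/p)^p\log(\delta/4\epsilon)+O(1)}{\log(\delta/4\epsilon)+O(1)}$, which tends to $\tfrac{1+\eta}{1-\eta}\left(\tfrac np\right)^p$ as $\epsilon\to 0$ because $\log(\delta/4\epsilon)\to\infty$. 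Therefore $\lambda_{1,p}(X)\le\tfrac{1+\eta}{1-\eta}\left(\tfrac np\right)^p$ for every $\eta>0$, and letting $\eta\to 0$ completes the proof.

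The only genuinely delicate point is the bookkeeping in the last step: one must confirm that the two transition-layer contributions remain bounded while the bulk contributions diverge logarithmically in $1/\epsilon$, so that the quotient is forced to the model value. It is exactly the exponent $n/p$ that makes both bulk integrands equal to a constant multiple of $r^{-1}$ — the same computation behind $\lambda_{1,p}(\mathbb{H}^{n+1}(-1))=\left(\tfrac np\right)^p$ — and this is what pins the bound. Everything else (the normal form for $g_+$ near $M$, the gradient and volume computations, and the admissibility of the Lipschitz cutoff as an element of $W^{1,p}_o$) is routine.
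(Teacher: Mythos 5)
Your proposal is correct, and it reaches the bound by the same underlying idea as the paper --- computing the Rayleigh quotient of power-of-$r$ test functions in a collar where the special defining function gives $|\nabla_{g_+}r|_{g_+}=r$ and $dv_{g_+}=r^{-n-1}(1+O(r))\,dv_{g_0}\,dr$ --- but the execution is genuinely different. You work directly at the critical exponent $n/p$, confine the test function to an annular collar $\{\epsilon<r<\delta\}$ with Lipschitz cutoffs on both sides, and exploit the fact that at this exponent both the gradient and the mass integrands reduce to constant multiples of $r^{-1}$, so the bulk contributions diverge like $\log(\delta/4\epsilon)$ while the transition layers stay $O(1)$; the quotient is then forced to $(n/p)^p$ up to the $\tfrac{1+\eta}{1-\eta}$ volume-form error, which you remove by shrinking the collar. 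The paper instead uses $r^s\phi_{\epsilon\delta}$ with $s$ strictly between $\tfrac{n-p}{p}$ and $\tfrac np$, so that both integrals diverge like $\epsilon^{ps-n}$; it tracks the leading coefficients through a two-parameter cutoff and binomial/Laurent expansions, obtains the one-parameter family of bounds $s^p\bigl(s+1-\tfrac np\bigr)+(1+s)^p\bigl(\tfrac np-s\bigr)$, and recovers $(n/p)^p$ only in the limit $s\to\tfrac np$ (or $s\to\tfrac{n-p}{p}$). Your route avoids the iterated $\delta\to0$, $\epsilon\to0$ limits and the expansion bookkeeping, at the price of needing the logarithmic-divergence observation; the paper's route gives the explicit intermediate family of bounds. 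Two small points you should make explicit but which are routine: compactly supported Lipschitz functions lie in $W^{1,p}_0(\Omega)$ (the paper's own test functions are likewise only Lipschitz), and the order of limits matters --- $\eta$ fixes the collar (hence $\delta$) before you send $\epsilon\to0$, which is exactly how you arranged it.
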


A few remarks are in order. To our knowledge, this result is novel for $p\not = 2$. The methods and techniques are also novel, even for the case when $p=2$, involving only the construction of a family of test functions ``near infinity" to capture the behavior of the manifold near its boundary. Additionally, we would like to point out that a slight generalization of Theorem \ref{p-UpperBound} is available for  manifolds which are AH$(-\kappa^2)$. Indeed, if a manifold $(X^{n+1}, g_+)$ is AH$(-\kappa^2)$, then $(X^{n+1}, \kappa^{2}g_+)$ would be AH$(-1)$ thanks to the scaling properties of the Riemann tensor and (\ref{TransformationRiemannTensor}). Since $\lambda_{1,p}(X,\kappa^{2}g_+) = \kappa^{-p}\lambda_{1,p}(X^{n+1},g_+)$ due to the variational characterization (\ref{p-Eigenvalue}), we derive the following consequence to Theorem \ref{p-UpperBound}:
\begin{corollary}\label{p-UpperBoundGeneral}
    Let $(X^{n+1},g_+)$ be AH$(-\kappa^2)$. Then $\lambda_{1,p}(X)\le \left(\frac{n\kappa}{p}\right)^p$.
\end{corollary}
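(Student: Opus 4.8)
The plan is to deduce Corollary~\ref{p-UpperBoundGeneral} from Theorem~\ref{p-UpperBound} by a homothety. Given an AH$(-\kappa^2)$ manifold $(X^{n+1},g_+)$, put $\tilde g_+ := \kappa^2 g_+$. Conformal compactness is a property of the conformal class, so $\tilde g_+$ is still conformally compact, with the same defining function $r$ and with compactified metric $\kappa^2\overline{g}$; moreover sectional curvature scales by $\kappa^{-2}$ under a homothety, so the asymptotic sectional curvatures of $(X^{n+1},\tilde g_+)$ are $\kappa^{-2}(-\kappa^2)=-1$ (equivalently $|dr|^2_{\kappa^2\overline{g}}|_M = \kappa^{-2}|dr|^2_{\overline{g}}|_M = 1$ by \eqref{TransformationRiemannTensor}), i.e.\ $(X^{n+1},\tilde g_+)$ is AH$(-1)$. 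On the other hand, under $c^2 g$ on an $(n+1)$-manifold one has $|\nabla_{c^2 g}f|_{c^2 g} = c^{-1}|\nabla_g f|_g$ and $dv_{c^2 g} = c^{n+1}\,dv_g$, so every Rayleigh quotient in \eqref{p-Eigenvalue} is multiplied by $c^{-p}$ uniformly in $f$ and in the domain; taking infima over test functions and over bounded smooth domains yields $\lambda_{1,p}(X,c^2 g) = c^{-p}\lambda_{1,p}(X,g)$. Applying this with $c=\kappa$ and then Theorem~\ref{p-UpperBound} to the AH$(-1)$ manifold $(X^{n+1},\tilde g_+)$ gives
\[
\lambda_{1,p}(X,g_+) \;=\; \kappa^{p}\,\lambda_{1,p}(X,\kappa^2 g_+) \;\le\; \kappa^{p}\Big(\tfrac{n}{p}\Big)^{p} \;=\; \Big(\tfrac{n\kappa}{p}\Big)^{p}.
\]

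Since that reduction is routine, all the substance is in Theorem~\ref{p-UpperBound}, for which the plan is as follows. Fixing a metric in the conformal infinity and a special defining function $r$ as in the Graham--Lee lemma, one has on a collar $M\times[0,r_0)$ the normal form $\overline{g}=r^2 g_+ = dr^2 + g_r$ with $|dr|^2_{\overline{g}}\equiv 1$; hence $g_+ = r^{-2}\overline{g}$, $(g_+)^{rr}=r^2$, and for $f=f(r)$ one gets $|\nabla_{g_+}f|^p = r^p|f'(r)|^p$ and $dv_{g_+} = r^{-(n+1)}\,dr\,dv_{g_r}$, with the density of $dv_{g_r}$ uniformly comparable to that of $dv_{g_\infty}$ near $M$ (using only continuity of $\overline g$ up to $M$). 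The exponent $n/p$ is forced by asking that $r^{n/p}$ turn both $|\nabla_{g_+}f|^p\,dv_{g_+}$ and $|f|^p\,dv_{g_+}$ into a constant times the (non-integrable) measure $r^{-1}\,dr\,dv_{g_r}$. Accordingly I would take $f_\varepsilon(r) = r^{n/p}\phi_\varepsilon(r)$, where $\phi_\varepsilon$ is the Lipschitz cutoff that is $0$ on $(0,\varepsilon^2]$, equal to the logarithmic interpolant $\log(r/\varepsilon^2)/\log(1/\varepsilon)$ on $[\varepsilon^2,\varepsilon]$, equal to $1$ on $[\varepsilon,r_0/2]$, and tapering to $0$ on $[r_0/2,3r_0/4]$; being compactly supported in $X$, $f_\varepsilon$ lies in $W^{1,p}_o(\Omega)$ for a suitable bounded smooth $\Omega$, so $\lambda_{1,p}(X)\le\lambda_{1,p}(\Omega)$ is at most the Rayleigh quotient of $f_\varepsilon$. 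On the bulk interval $[\varepsilon,r_0/2]$ (where $\phi_\varepsilon\equiv 1$) the numerator integrand is exactly $(n/p)^p\,r^{-1}\,dr\,dv_{g_r}$ and the denominator integrand is exactly $r^{-1}\,dr\,dv_{g_r}$, so these pieces are in the ratio $(n/p)^p$ and each diverges like $\log(1/\varepsilon)$ as $\varepsilon\to 0$; on $[\varepsilon^2,\varepsilon]$ one has $f_\varepsilon'(r)=r^{n/p-1}\big(\tfrac{n}{p}\phi_\varepsilon + \tfrac{1}{\log(1/\varepsilon)}\big)$ with both summands $\ge 0$, and $(a+b)^p\le a^p + p\,b\,(a+b)^{p-1}$ bounds this piece of the numerator by $(n/p)^p$ times the corresponding piece of the denominator plus an $O(1)$ term (since $\tfrac{1}{\log(1/\varepsilon)}\int_{\varepsilon^2}^{\varepsilon}r^{-1}\,dr = 1$), while the taper near $r_0$ contributes $\varepsilon$-independent constants. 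Summing, $\int_X|\nabla_{g_+}f_\varepsilon|^p\,dv_{g_+}\le(n/p)^p\int_X|f_\varepsilon|^p\,dv_{g_+}+O(1)$ and $\int_X|f_\varepsilon|^p\,dv_{g_+}\to\infty$, so the Rayleigh quotient of $f_\varepsilon$ tends to $(n/p)^p$ and $\lambda_{1,p}(X)\le(n/p)^p$.

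The delicate step, and the one I expect to be the main obstacle, is exactly this last piece of bookkeeping: one must be sure that the non-constancy of the boundary density $dv_{g_r}$ (as against the exactly flat half-space model of $\mathbb{H}^{n+1}$) and, when $p\neq 2$, the nonlinearity of $t\mapsto t^p$ produce only $O(1)$ errors, never a further $\log(1/\varepsilon)$ term that would degrade the constant below sharpness. The argument succeeds because $dv_{g_r}\to dv_{g_\infty}$ boundedly as $r\to 0$ is all that is required --- no decay rate on the curvature, and in particular no Einstein or higher-order conformal compactness hypothesis, beyond continuity of $\overline g$ up to $M$.
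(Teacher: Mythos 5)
Your proof of the corollary itself is correct and is essentially the paper's argument: rescale to $\kappa^2 g_+$, observe via \eqref{TransformationRiemannTensor} that $|dr|^2_{\kappa^2\overline g}|_M=1$ so $(X,\kappa^2 g_+)$ is AH$(-1)$, use the homothety scaling $\lambda_{1,p}(X,\kappa^2 g_+)=\kappa^{-p}\lambda_{1,p}(X,g_+)$ of the Rayleigh quotient, and apply Theorem \ref{p-UpperBound}. Where you genuinely depart from the paper is in your appended sketch of Theorem \ref{p-UpperBound}: the paper tests with $r^s\phi_{\epsilon\delta}$ for \emph{subcritical} exponents $\frac{n-p}{p}<s<\frac{n}{p}$ and a piecewise-linear cutoff, tracks the leading $\epsilon^{ps-n}$ coefficients of numerator and denominator through binomial expansions, and only recovers $(n/p)^p$ after the triple limit $\delta\to0^+$, $\epsilon\to0^+$, $s\to\frac{n}{p}^-$; you instead take the critical exponent $s=n/p$ outright with a logarithmic cutoff, so that both integrals diverge like $\log(1/\varepsilon)$ with ratio exactly $(n/p)^p$ on the bulk, and the transition zone is controlled by $(a+b)^p\le a^p+pb(a+b)^{p-1}$ together with $\frac{1}{\log(1/\varepsilon)}\int_{\varepsilon^2}^{\varepsilon}r^{-1}dr=1$, giving numerator $\le (n/p)^p\cdot$denominator$+O(1)$. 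Your version buys shorter bookkeeping (no expansion in $\delta$, no limit in $s$) and needs only two-sided bounds on the boundary volume density rather than the $1+O(r)$ expansion \eqref{VolumeExp3}; the paper's version, in exchange, produces the explicit one-parameter family of bounds $s^p\left(s+1-\frac{n}{p}\right)+(1+s)^p\left(\frac{n}{p}-s\right)$ valid for all admissible $s$. One small caveat: your closing claim that only continuity of $\overline g$ up to $M$ is used is a slight overstatement, since the normal form $\overline g=dr^2+g_r$ with $|\nabla_{g_+}r|^2_{g_+}=r^2$ rests on the Graham--Lee special defining function, whose construction requires more regularity; under the paper's standing $C^{3,\alpha}$ hypothesis this is harmless.
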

\begin{remark}
Recall that, for any defining function $r$, $\kappa=(|dr|_{r^2g_+})|_{M}.$ So, the upper bound can be re-written as \begin{equation}
\lambda_{1,p}(X)\le \left(\frac{n(|dr|_{r^2g_+})|_{M}}{p}\right)^p. \end{equation}
\end{remark}
Different bounds have been derived for $\lambda_{1,p}(X)$ under various geometric assumptions and through a variety of methods. In \cite{CarvalhoFranciscoG.2022Otft}, Carvalho-Cavalcante generalized a classical result due to McKean \cite{McKean} and showed that for simply connected manifolds with negative sectional curvature bounded above by $-\kappa^2$, the first $p$-Dirichlet eigenvalue is at least $\left(\frac{n\kappa}{p}\right)^p$ - this bound is sharp as it is precisely what you obtain in $\mathbb{H}^{n+1}(-\kappa^2)$. Therefore, together with Corollary \ref{p-UpperBoundGeneral}, we derive
\begin{corollary}\label{Sharp-pEstimate}
     Let $(X^{n+1},g_+)$ be simply connected and AH$(-\kappa^2)$ with sectional curvature bounded above by $-\kappa^2$. Then $\lambda_{1,p}(X) = \left(\frac{n\kappa}{p}\right)^p$. 
\end{corollary}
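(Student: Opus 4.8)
The plan is to obtain the equality by sandwiching $\lambda_{1,p}(X)$ between matching upper and lower bounds. Since $(X^{n+1},g_+)$ is assumed to be AH$(-\kappa^2)$, Corollary \ref{p-UpperBoundGeneral} applies verbatim and yields
\[
\lambda_{1,p}(X) \le \left(\frac{n\kappa}{p}\right)^p.
\]

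For the reverse inequality, I would invoke the McKean-type lower bound of Carvalho--Cavalcante from \cite{CarvalhoFranciscoG.2022Otft}: on a complete, simply connected Riemannian manifold whose sectional curvature is bounded above by $-\kappa^2$, one has $\lambda_{1,p}(X) \ge \left(\frac{n\kappa}{p}\right)^p$. I would then check that the hypotheses are met in our situation: $(X^{n+1},g_+)$ is complete because $g_+$ is conformally compact (hence complete), it is simply connected by assumption, and its sectional curvature is bounded above by $-\kappa^2$ by assumption. Therefore the cited result gives
\[
\lambda_{1,p}(X) \ge \left(\frac{n\kappa}{p}\right)^p.
\]

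Combining the two displays yields $\lambda_{1,p}(X) = \left(\frac{n\kappa}{p}\right)^p$. There is no substantive obstacle here: the statement is a direct consequence of Corollary \ref{p-UpperBoundGeneral} together with the cited lower bound, and the only point deserving a line of comment is that the hypotheses of the Carvalho--Cavalcante theorem—in particular completeness—are automatic for any conformally compact metric, so the two bounds genuinely meet and the estimate is sharp, consistent with the value obtained on the model space $\mathbb{H}^{n+1}(-\kappa^2)$.
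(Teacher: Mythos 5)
Your proposal is correct and matches the paper's argument exactly: the upper bound comes from Corollary \ref{p-UpperBoundGeneral}, the lower bound from the McKean-type estimate of Carvalho--Cavalcante in \cite{CarvalhoFranciscoG.2022Otft} for complete, simply connected manifolds with sectional curvature at most $-\kappa^2$, and the two bounds coincide. The remark that completeness is automatic for a conformally compact metric is a fine, if unstated in the paper, sanity check.
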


There is limited literature on upper bounds for $\lambda_{1,p}(X)$ when $X$ is asymptotically hyperbolic or under other geometric assumptions. In \cite{Lima}, Lima-Montenegro-Santos obtained an upper bound for what they call the essential $p$-first eigenvalue in terms of the exponential volume growth $O(X)$ of the manifold. In particular, they proved that for manifolds with infinite volume, the essential $p$-first eigenvalue is bounded above by $\left(\frac{O(X)}{p}\right)^p$ (see Theorem 1.4 in \cite{Lima}). Their results, as well as their techniques, are a generalization of the corresponding result by Brooks \cite{BrooksRobert1981Arbg} when $p=2$. Bounds of the same type as in Theorem \ref{p-UpperBound} have been found in \cite{LiZhi2020UBot}, but there they do not work with Dirichlet boundary conditions.

\subsection{Eigenvalue Estimates on Submanifolds of AH Spaces}\label{EESAH}

We proceed with a discussion of eigenvalue estimates on submanifolds of asymptotically hyperbolic spaces. Let us start with some consequences of Theorem \ref{p-UpperBound} for a generic $p\in(1,\infty)$.

In \cite{DuMao}, Du and Mao, using techniques developed by Cheung-Leung in \cite{CheungLeung-Fu2001Eefs}, showed that if $Y$ is a $(k+1)$-dimensional, complete and noncompact immersed submanifold in $\mathbb{H}^{n+1}(-1)$ with norm of the mean curvature satisfying $|H|\le \alpha<k$, then 
\begin{equation}\label{p-EigenvalueLowerBound}
0<\left(\frac{k-\alpha}{p}\right)^p \le \lambda_{1,p}(Y) .
\end{equation}
For complete, $(k+1)$-dimensional minimal submanifolds in $\mathbb{H}^{n+1}(-1)$, the lower bound becomes $\left(\frac{k}{p}\right)^p$, which is sharp as it is the value on totally geodesic planes. We demonstrate that the upper bound is also sharp on a broader class of submanifolds of AH spaces that we call asymptotically minimal submanifolds. More generally, we define:
\begin{definition}\label{A-CMC}
    Assume $(X^{n+1},g_+)$ is a conformally compact manifold. Let $\overline{Y}^{k+1}$ be a compact manifold with boundary and interior $Y$, and let $\iota \colon \overline{Y}\to \overline{X}^{n+1}$ be a $C^{2,\alpha}$ immersion. If $(Y,\iota^*g_+)$ is conformally compact with the property that $\iota|_{\partial \overline{Y}}\colon\partial \overline{Y}\to M$ is an immersion such that $\iota(\overline{Y})$ meets $M$ transversely, then we say that $(Y^{k+1},\iota^*g_+)$ is a conformally compact submanifold of $(X^{n+1},g_+)$. Furthermore, if its mean curvature vector $H$ satisfies $g_+(H,H)=C^2+O(r)$ for some $C\in [0,\infty)$ and where $r$ is any defining function $M$, then we say that $(Y,\iota^*g_+)$ is asymptotically CMC with asymptotic mean curvature equal to $C$. In the case where $C=0$, we say that $Y$ is asymptotically minimal. 
\end{definition}
\begin{remark}
Given any pair of defining functions $r$ and $\rho$ for $M,$ $r=O(\rho).$ It follows that the asymptotic mean curvature as described in the previous definition is well-defined. 
\end{remark}
Conformally compact submanifolds are complete and non-compact. Some examples of conformally compact submanifolds in hyperbolic space are the totally geodesic $k$-cells and the spherical catenoids; a non-example is given by the horospheres. Horospheres are a non-example since although they are complete and non-compact, they do not have a boundary under compactification. Another class of conformally compact submanifolds are the type of submanifolds introduced by Graham and Witten in \cite{Graham-Witten}. These submanifolds are also called polyhomogeneously immersed submanifolds; for more information on these, the reader may consult \cite{marx2021variations} and \cite{Case}. Finally, some submanifolds arising from the asymptotic Plateau problem would also be examples of asymptotically minimal submanifolds (see \cite{coskunuzer2009asymptotic} for a survey on the asymptotic Plateau problem). 

We are now in a position to state our first result related to this class of submanifolds. We prove that any asymptotically CMC submanifold with asymptotic mean curvature equal to $C$ will be AH$\left(-\left(1-\frac{C^2}{(k+1)^2}\right)\right)$ itself, assuming that it lives within an AH space, thus allowing us to apply Corollary \ref{p-UpperBoundGeneral}.
\begin{proposition}\label{ACM}
    Let $Y^{k+1}$ be an asymptotically CMC submanifold of an asymptotically hyperbolic space $(X^{n+1},g_+)$ with asymptotic mean curvature $C.$ Then $Y$ is asymptotically hyperbolic with asymptotic sectional curvatures equal to $-\left(1-\frac{C^2}{(k+1)^2}\right)$. In particular, if $Y$ is asymptotically minimal, then $Y$ is asymptotically hyperbolic.
\end{proposition}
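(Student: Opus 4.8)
The plan is to work near the boundary $M$ using a special defining function $r$ for $(X^{n+1},g_+)$ normalized so that $\bar g=r^2 g_+$ satisfies $|dr|_{\bar g}\equiv 1$ in a collar, and then transport this picture to $Y$ via the immersion. Concretely, I would first show that $\rho:=\iota^*r$ is a defining function for $\partial\overline Y$ in $\overline Y$: positivity and vanishing on the boundary are immediate, and $d\rho\neq 0$ on $\partial\overline Y$ follows from the transversality hypothesis (the kernel of $dr$ along $M$ is $TM$, and transversality means $T\iota(\overline Y)\not\subseteq TM$ at boundary points, so $dr$ does not vanish on $T\iota(\partial\overline Y)$ when restricted appropriately — one checks $\iota^*dr\neq 0$ on $T\partial\overline Y$). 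Since $\iota^*\bar g = \rho^2(\iota^*g_+) = \rho^2 h_+$ extends to a $C^{2,\alpha}$ metric on $\overline Y$ — it is just the induced metric of the compactified immersion $\iota\colon\overline Y\to(\overline X,\bar g)$ — we already know $(Y,h_+)$ is conformally compact with defining function $\rho$; the content is to pin down the asymptotic sectional curvature.

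The key computation is the Gauss equation. Writing $h_+ = \iota^* g_+$ and letting $A$ denote the second fundamental form of $Y$ in $(X,g_+)$ and $H=\tr_{h_+}A$ its mean curvature vector, the Gauss equation gives, for an orthonormal pair $e_i,e_j$ tangent to $Y$,
\[
R^{h_+}_{ijij} = R^{g_+}_{ijij} + g_+(A(e_i,e_i),A(e_j,e_j)) - g_+(A(e_i,e_j),A(e_i,e_j)).
\]
By \eqref{TransformationRiemannTensor} and the AH hypothesis on $X$, $R^{g_+}_{ijij}\to -1$ as $r\to 0$ (more precisely $R^{g_+}_{ijij} = -1 + O(r)$ in the $g_+$-norm). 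So everything reduces to the asymptotics of the full second fundamental form, not merely its trace. Here the asymptotically-CMC hypothesis controls $|H|^2_{g_+} = C^2 + O(r)$, but to evaluate the right-hand side above I need the traceless part $\mathring A$ as well. The crucial claim — and this is where I expect the main obstacle — is that $\mathring A = O(r)$ in the appropriate ($g_+$-)sense as $r\to 0$, equivalently that the limiting shape operator is pure trace, i.e. the submanifold is asymptotically umbilic. This should follow from the structure of conformally compact metrics: in the collar, $g_+ = r^{-2}(dr^2 + g_r)$ with $g_r$ a family of metrics on $M$, and an immersion meeting $M$ transversely with a $C^{2,\alpha}$-compactification has, to leading order, a second fundamental form dominated by the $r^{-2}$-conformal-factor contribution, which is umbilic (it is $-\frac{1}{r}\,dr\otimes(\,\cdot\,)$ type, proportional to the induced metric). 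I would make this precise by conformally relating $A$ (the second fundamental form in $g_+$) to $\bar A$ (the second fundamental form in $\bar g=r^2 g_+$, which is bounded since the compactified immersion is $C^{2,\alpha}$): under $g_+ = r^{-2}\bar g$ one has the standard conformal transformation law $A = \bar A - (\iota^*d\log r)^\perp\,\bar h$ where $\bar h=\rho^2 h_+$ and $\perp$ denotes the normal component; the inhomogeneous term is exactly umbilic, and rescaling to the $g_+$-norm shows $\mathring A$ decays like $r$ while the trace part carries the $O(1)$ behavior.

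Granting the asymptotic umbilicity, write $A(e_i,e_j) = \frac{1}{k+1}h_+(e_i,e_j)H + \mathring A(e_i,e_j)$ with $\mathring A\to 0$. Then $g_+(A(e_i,e_i),A(e_j,e_j)) - g_+(A(e_i,e_j),A(e_i,e_j)) \to \frac{1}{(k+1)^2}|H|^2_{g_+} = \frac{C^2}{(k+1)^2}$ for the orthonormal frame (the cross terms with $\mathring A$ vanish in the limit, and the $\mathring A$–$\mathring A$ terms are $O(r^2)$). Combining, $R^{h_+}_{ijij}\to -1 + \frac{C^2}{(k+1)^2} = -\left(1-\frac{C^2}{(k+1)^2}\right)$, which is exactly the asserted asymptotic sectional curvature; since $(Y,h_+)$ is conformally compact this says $Y$ is $\mathrm{AH}\!\left(-\left(1-\frac{C^2}{(k+1)^2}\right)\right)$, and setting $C=0$ gives the asymptotically minimal case. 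I would close by checking the one subtlety glossed over: that $R^{g_+}_{ijij}$ tested against a frame orthonormal for $h_+$ (which is the restriction of $g_+$ to $TY$) still tends to $-1$ — this is immediate because such a frame is orthonormal for $g_+$ as well, $h_+$ being the induced metric, so \eqref{TransformationRiemannTensor} applies verbatim. The remaining work is bookkeeping of the error orders, which I would organize by carrying everything in $\bar g$-quantities (all bounded, $C^{1,\alpha}$ up to the boundary) and converting to $g_+$-norms only at the end.
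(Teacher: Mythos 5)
Your argument is essentially the paper's: both rest on the Gauss equation combined with the conformal transformation law for the second fundamental form under $g_+=r^{-2}\bar g$, together with the ambient asymptotics \eqref{TransformationRiemannTensor}. Your ``asymptotic umbilicity'' claim --- that the trace-free part of the $g_+$-second fundamental form is $O(r)$ in the $h_+$-norm because it comes entirely from the bounded $\bar g$-second fundamental form, which carries one fewer power of $r^{-1}$ than the umbilic term --- is exactly what the paper's identity $B^{\alpha'}_{\alpha\beta}=r^{-1}\overline B^{\alpha'}_{\alpha\beta}+r^{-2}\overline\mu_{\alpha'}(r)\overline h_{\alpha\beta}$ encodes, and your use of $g_+(H,H)=C^2+O(r)$ to identify the leading quadratic term in the Gauss equation as $C^2/(k+1)^2$ is equivalent to the paper's bookkeeping with the projected gradient $P(\nabla_{\bar g}r)$ via $\cos\Theta_{\alpha'}=H^{\alpha'}/(k+1)+O(r)$. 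So the computation of the limiting sectional curvature $-\bigl(1-\tfrac{C^2}{(k+1)^2}\bigr)$ is correct and follows the same route; your preliminary verification that $\iota^*r$ is a defining function is harmless but not needed, since conformal compactness of $(Y,\iota^*g_+)$ is part of the definition of a conformally compact submanifold.

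The one genuine omission is that you never establish $C<k+1$, which is needed to conclude that $Y$ is asymptotically hyperbolic in the paper's sense (asymptotic curvature $-\kappa^2$ with $\kappa>0$) rather than merely that its sectional curvatures tend to the constant $-\bigl(1-\tfrac{C^2}{(k+1)^2}\bigr)$: if $C=k+1$ the limit would be $0$ and the conclusion would fail. The paper handles this by noting $|P(\nabla_{\bar g}r)|^2_{\bar g}\le|\nabla_{\bar g}r|^2_{\bar g}=1$, which gives $C^2\le(k+1)^2$ after letting $r\to 0$, and then ruling out equality: $C=k+1$ would force $P(\nabla_{\bar g}r)=\nabla_{\bar g}r$ on the boundary, i.e.\ $\nabla_{\bar g}r$ entirely normal to $Y$ there, hence $T_qY\subset T_qM$ for boundary points $q$, contradicting the transversality built into the definition. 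You already invoke transversality (to get $\iota^*dr\neq 0$), so the ingredient is at hand; in your formulation the needed statement is that equality $|H|^2/(k+1)^2=1$ in the boundary limit corresponds exactly to this degenerate tangency and is therefore excluded. Adding that step closes the gap.
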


As a consequence of Proposition \ref{ACM} and Corollary \ref{p-UpperBoundGeneral}, we obtain:
\begin{corollary}\label{cor3}
    Let the setup be the same as that in \cref{ACM}. Then 
    \begin{equation}\label{ACM-eigenvalue}
    \lambda_{1,p}(Y^{k+1})\leq \left(\frac{k}{p}\right)^p\left(1-\frac{C^2}{(k+1)^2}\right)^{\frac{p}{2}}.
    \end{equation}
    In particular, if $Y$ is asymptotically minimal, then the upper bound becomes $\left(\frac{k}{p}\right)^p$.
\end{corollary}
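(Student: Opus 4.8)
The plan is to read off \eqref{ACM-eigenvalue} directly from the two results already in hand, \cref{ACM} and \cref{p-UpperBoundGeneral}; the geometric content has all been established upstream, so what remains is a short bookkeeping argument. First I would invoke \cref{ACM}: because $Y^{k+1}$ is an asymptotically CMC submanifold of the asymptotically hyperbolic space $(X^{n+1},g_+)$ with asymptotic mean curvature $C^2$, the induced metric $h_+:=\iota^*g_+$ makes $(Y^{k+1},h_+)$ an asymptotically hyperbolic manifold whose asymptotic sectional curvatures equal $-\kappa^2$, where
\[
\kappa^2:=1-\frac{C^2}{(k+1)^2}>0,
\]
the positivity being part of the conclusion of \cref{ACM} (being asymptotically hyperbolic means the asymptotic sectional curvatures equal $-\kappa^2$ for some $\kappa>0$). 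Thus $(Y^{k+1},h_+)$ is AH$(-\kappa^2)$ in the sense of the introduction, with the ambient dimension $n+1$ there now played by $k+1$.

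Next I would apply \cref{p-UpperBoundGeneral} to $(Y^{k+1},h_+)$. Since the exponent base $n$ in that corollary is one less than the dimension of the manifold, here it becomes $k$, and we obtain
\[
\lambda_{1,p}(Y)\le\left(\frac{k\kappa}{p}\right)^p=\left(\frac{k}{p}\right)^p\kappa^p=\left(\frac{k}{p}\right)^p\left(1-\frac{C^2}{(k+1)^2}\right)^{p/2},
\]
which is exactly \eqref{ACM-eigenvalue}; here $\lambda_{1,p}(Y)$ is taken with respect to $h_+$, as fixed by the conventions following the definition of $\lambda_{1,p}$. Specializing to $C=0$ makes $Y$ asymptotically minimal, whence $\kappa=1$ and the bound collapses to $\left(\frac{k}{p}\right)^p=\lambda_{1,p}(\mathbb{H}^{k+1}(-1))$; if in addition the ambient space is $\mathbb{H}^{n+1}(-1)$, then combining this with the Du--Mao lower bound \eqref{p-EigenvalueLowerBound} (taken with $\alpha=0$) yields the equality $\lambda_{1,p}(Y)=\left(\frac{k}{p}\right)^p$ promised in the abstract. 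There is no real obstacle in the corollary itself: the one thing to verify is that the hypotheses of \cref{p-UpperBoundGeneral} genuinely hold for $(Y,h_+)$ with the stated $\kappa$, and that is exactly what \cref{ACM} provides, so whatever difficulty there is has been absorbed into \cref{ACM}.
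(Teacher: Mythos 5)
Your argument is correct and is exactly the paper's route: \cref{ACM} makes $(Y^{k+1},\iota^*g_+)$ AH$\left(-\left(1-\frac{C^2}{(k+1)^2}\right)\right)$ (with $C<k+1$ established in \cref{ACM-Sec}, so $\kappa>0$), and \cref{p-UpperBoundGeneral} applied in dimension $k+1$ gives the bound $\left(\frac{k\kappa}{p}\right)^p$. No differences worth noting.
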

\begin{remark}
It will be shown in \cref{ACM-Sec} that in our context $C<k+1$ always holds.
\end{remark}
 Together with (\ref{p-EigenvalueLowerBound}) for the minimal case ($\alpha=0$), we conclude that the lower bound is attained for minimal conformally compact submanifolds of $\mathbb{H}^{n+1}(-1)$ for any $p\in(1,\infty)$. In other words, the bound $\left(\frac{k}{p}\right)^p$ is optimal for these submanifolds. We summarize our discussion in the next corollary:

\begin{corollary}\label{DuMao-answer}
    Let $Y^{k+1}$ be a minimal conformally compact submanifold of $\mathbb{H}^{n+1}(-1).$ Then $\lambda_{1,p}(Y) = \left(\frac{k}{p}\right)^p$.
\end{corollary}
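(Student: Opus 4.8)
The plan is to combine the two inequalities that the excerpt has already assembled. On one side, \cref{DuMao-answer} concerns a minimal conformally compact submanifold $Y^{k+1}\subseteq \mathbb{H}^{n+1}(-1)$. Minimality means the mean curvature vector vanishes identically, so in particular $g_+(H,H)=0=0^2+O(r)$; hence $Y$ is asymptotically CMC with asymptotic mean curvature $C^2=0$, i.e.\ asymptotically minimal. By \cref{ACM}, $Y$ is then asymptotically hyperbolic with asymptotic sectional curvature $-1$, so \cref{cor3} applies with $C=0$ and gives the upper bound $\lambda_{1,p}(Y)\le \left(\tfrac{k}{p}\right)^p$.

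For the matching lower bound, I would invoke the Cheung--Leung/Du--Mao estimate \eqref{p-EigenvalueLowerBound}: a complete, noncompact, $(k+1)$-dimensional immersed submanifold of $\mathbb{H}^{n+1}(-1)$ with $\|H\|\le \alpha<k$ satisfies $\lambda_{1,p}(Y)\ge \left(\tfrac{k-\alpha}{p}\right)^p$. Since a minimal submanifold has $H\equiv 0$, we may take $\alpha=0$, and the hypothesis $\alpha<k$ is automatic because $k\ge 1$. The excerpt notes explicitly that conformally compact submanifolds are complete and noncompact, so the hypotheses of \eqref{p-EigenvalueLowerBound} are met, yielding $\lambda_{1,p}(Y)\ge \left(\tfrac{k}{p}\right)^p$. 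Combining the two bounds forces equality.

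In short, the argument is:
\begin{enumerate}
 \item Observe that a minimal conformally compact submanifold is in particular asymptotically minimal ($C=0$), so \cref{ACM} and the $C=0$ case of \cref{cor3} give $\lambda_{1,p}(Y)\le \left(\tfrac{k}{p}\right)^p$.
 \item Observe that $Y$ is complete and noncompact with $\|H\|\equiv 0<k$, so \eqref{p-EigenvalueLowerBound} with $\alpha=0$ gives $\lambda_{1,p}(Y)\ge \left(\tfrac{k}{p}\right)^p$.
 \item Conclude $\lambda_{1,p}(Y)=\left(\tfrac{k}{p}\right)^p$.
\end{enumerate}

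I do not anticipate a genuine obstacle here: \cref{DuMao-answer} is a corollary that merely records the confluence of two results already established (one in this paper, one cited). The only point requiring a word of care is the logical order, namely that minimality is a stronger condition than asymptotic minimality so that \cref{ACM}/\cref{cor3} are applicable, and that $k\ge 1$ guarantees the strict inequality $0<k$ needed to invoke \eqref{p-EigenvalueLowerBound}. Everything else is a one-line citation.
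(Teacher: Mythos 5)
Your proposal is correct and follows exactly the paper's own route: the upper bound comes from \cref{ACM} and \cref{cor3} with $C=0$ (minimal implies asymptotically minimal), and the matching lower bound is the Du--Mao estimate \eqref{p-EigenvalueLowerBound} with $\alpha=0$, using that conformally compact submanifolds are complete and noncompact. Nothing further is needed.
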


The spherical catenoids $M_{-1,1}^n\subset \mathbb{H}^{n+1}(-1)$, introduced by Do Carmo-Dajczer in \cite{DoCarmo-Dj}, all have the same first eigenvalue. This follows since they are all minimal conformally compact submanifolds of hyperbolic space. One should note that they are not all isometric to each other (if this were the case, then it would not be interesting that they have the same first eigenvalue). In particular, it can be shown that there is a $1$-parameter family of them such that any spherical catenoid is mapped to a member of this family by an isometry of hyperbolic space and such that no two members of this family are isometric. In an upcoming paper by the second author, these statements are shown to be true along with other results.

We turn our attention to acquiring bounds on the first Dirichlet eigenvalue ($p=2$) of submanifolds of asymptotically hyperbolic spaces. First, let us recall some previous work on eigenvalue estimates for submanifolds of hyperbolic space. As previously mentioned, for complete, simply connected, $(n+1)$-dimensional manifolds $X$ with sectional curvature bounded above by $-\kappa^2$, $\kappa\not=0$, we have 
\begin{equation}\label{McKeanEstimate}
 \left(\frac{n\kappa}{2}\right)^2 \le \lambda_{1,2}(X);
\end{equation}
see \cites{McKean, CarvalhoFranciscoG.2022Otft}. In \cite{CheungLeung-Fu2001Eefs}, Cheung-Leung extended inequality (\ref{McKeanEstimate}) to complete non-compact submanifolds $Y^{k+1}$ of hyperbolic space $\mathbb{H}^{n+1}(-1)$ with sectional curvature satisfying $|H|\le \alpha<k$ by showing 
\begin{equation}\label{CL-estimate}
 \left(\frac{k-\alpha}{2}\right)^2 \le \lambda_{1,2}(Y).
\end{equation}
Their result follows from the following Poincar\'e-type inequality (Proposition 1 in \cite{CheungLeung-Fu2001Eefs}): for all $f\in C^\infty_0(Y)$, we have
\begin{equation}
    \left(\frac{k-\alpha}{2}\right)^2\int_Y f^2\le \int_Y |\nabla f|^2.
\end{equation}
A key element in their work is the following. Let $p\in \mathbb{H}^{n+1}(-1)\setminus Y$, and set $x(y) := d_{g_H}(y,p)$, where $g_H$ is the hyperbolic metric introduced in (\ref{HyperbolicBall}).
Then $u:\mathbb{H}^{n+1}(-1)\rightarrow \mathbb{R}$, $u(y) := \cosh(x(y))$, satisfies 
\begin{equation}\label{EigenvalueEq}
\Delta_{g_H}u = (n+1)u
\end{equation}
on $\mathbb{H}^{n+1}(-1)$. In fact, $u$ is an example of what we call a Lee-eigenfunction: these are smooth, positive functions solving (\ref{EigenvalueEq}) and satisfying certain asymptotics at the boundary; see Definition \ref{Lee-Eigenfunction} for precise details. By noticing this, we are able to give a simpler argument in the spirit of Lee's \cite{LeeJohnM.1995Tsoa} that also generalizes their result to any such submanifold living in a general Poincar\'e-Einstein manifold $(X^{n+1},g_+)$ with conformal infinity having non-negative Yamabe invariant.

\begin{theorem}\label{lower}
Let $\iota\colon Y^{k+1}\to X^{n+1}$ be a complete and noncompact immersion into a Poincar\'e-Einstein space $(X^{n+1},g_+)$ whose conformal infinity $(M,[g_+]_{\infty})$ has non-negative Yamabe invariant, and let $u$ be a Lee-eigenfunction. Denote by $b(u):= \nabla_{g_+}^2u - \frac{1}{n+1}(\Delta_{g_+}u)g_+ = \nabla_{g_+}^2u - ug_+$ the trace-free hessian of $u$. If the mean curvature vector of $Y$ has  norm satisfying $|H|\leq \alpha$ for some constant $\alpha,$ and if $\beta^Y(u):= \underset{Y}{\sup} \left(u^{-1}\cdot tr\left(b(u)|_{(TY^{\perp}){}^2}\right)\right)$ satisfies $\alpha+\beta^Y(u)<k$, then it follows that 
\begin{equation}\label{lower-estimate1}
0<\left(\frac{k-\alpha-\beta^Y(u)}{2}\right)^2\leq\lambda_{1,2}(Y).
\end{equation} Note: $0\leq \beta^Y(u)$ since $u= O(r^{-1})$ and $tr\big(b(u)|_{(TY^{\perp}){}^2}\big)= O(1),$ as $r\to 0$.
\end{theorem}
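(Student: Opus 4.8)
The plan is to adapt the McKean--Cheung--Leung argument, now with the Lee-eigenfunction $u$ (restricted to $Y$) playing the role of $\cosh(d_{g_H}(\cdot,p))$. The properties of $u$ I will use are those recorded in \cref{Lee-Eigenfunction}: $u>0$ on $X$, $\Delta_{g_+}u=(n+1)u$, and the pointwise gradient bound $|\nabla_{g_+}u|^2\le u^2$, which is the analogue of the hyperbolic identity $|\nabla_{g_H}\cosh d|^2=\cosh^2 d-1$; the relation $\nabla_{g_+}^2u=u\,g_+ + b(u)$ is merely the definition of $b(u)$ together with $\Delta_{g_+}u=(n+1)u$. The non-negative Yamabe hypothesis on $(M,[g_+]_\infty)$ is not invoked directly below: it is what guarantees the existence of such a $u$.

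First I would compute $\Delta_Y(u|_Y)$ for the induced metric $h_+=\iota^*g_+$. Let $A$ denote the vector-valued second fundamental form of $\iota$ and $H=\tr_{TY}A$ the mean curvature vector. The Gauss-type identity $\nabla_{h_+}^2(u|_Y)(V,W)=\nabla_{g_+}^2u(V,W)+\langle\nabla_{g_+}u,A(V,W)\rangle$ for $V,W\in TY$, traced over an orthonormal frame of $TY$ completed to one of $TX$ and combined with $\Delta_{g_+}u=\tr_{g_+}\nabla_{g_+}^2u$ and $\nabla_{g_+}^2u=u\,g_+ + b(u)$ (so that the sum over the $n-k$ normal directions equals $(n-k)u+\tr\big(b(u)|_{(TY^{\perp}){}^2}\big)$), yields
\[
\Delta_Y(u|_Y)=(k+1)u-\tr\big(b(u)|_{(TY^{\perp}){}^2}\big)+\langle\nabla_{g_+}u,H\rangle .
\]
Now $\tr\big(b(u)|_{(TY^{\perp}){}^2}\big)\le u\,\beta^Y(u)$ by the definition of $\beta^Y(u)$ and $u>0$, while $\langle\nabla_{g_+}u,H\rangle=\langle(\nabla_{g_+}u)^{\perp},H\rangle\ge-\|H\|\,|\nabla_{g_+}u|\ge-\alpha u$ by $\|H\|\le\alpha$ and $|\nabla_{g_+}u|\le u$. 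Hence $\Delta_Y(u|_Y)/u\ge(k+1)-\alpha-\beta^Y(u)$ on $Y$.

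Finally, set $\psi:=\log u$, which is smooth on $Y$ since $u>0$. Then $\nabla_Y\psi=u^{-1}\nabla_Y(u|_Y)$ gives $|\nabla_Y\psi|^2=|\nabla_Y(u|_Y)|^2/u^2\le|\nabla_{g_+}u|^2/u^2\le1$, and $\Delta_Y\psi=\Delta_Y(u|_Y)/u-|\nabla_Y\psi|^2\ge\big((k+1)-\alpha-\beta^Y(u)\big)-1=\mu$, where $\mu:=k-\alpha-\beta^Y(u)>0$ by hypothesis. For any $f\in C_0^\infty(Y)$, integrating by parts (no boundary contribution, as $f$ has compact support even though $Y$ is noncompact) and using $|\nabla_Y\psi|\le1$ and Cauchy--Schwarz,
\[
\mu\int_Y f^2\le\int_Y f^2\,\Delta_Y\psi=-2\int_Y f\,\langle\nabla_Y f,\nabla_Y\psi\rangle\le 2\int_Y|f|\,|\nabla_Y f|\le 2\Big(\int_Y f^2\Big)^{1/2}\Big(\int_Y|\nabla_Y f|^2\Big)^{1/2},
\]
so $\big(\mu/2\big)^2\int_Y f^2\le\int_Y|\nabla_Y f|^2$ for all such $f$; by the variational characterization of $\lambda_{1,2}(Y)$ this gives $\lambda_{1,2}(Y)\ge(\mu/2)^2$, together with the strict positivity $0<(\mu/2)^2$. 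Within this argument the Gauss-equation trace computation and the integration by parts are entirely routine; the one point requiring care is the gradient bound $|\nabla_{g_+}u|\le u$ — it is precisely what makes the mean-curvature term cost at most $\alpha$ and the term $|\nabla_Y\psi|^2$ cost exactly $1$, hence the sharp constant — and it is part of the defining properties of a Lee-eigenfunction established earlier.
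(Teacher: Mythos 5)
Your argument is correct, and its geometric core coincides with the paper's: you restrict $u$ to $Y$, split the trace of $\nabla^2_{g_+}u = u\,g_+ + b(u)$ into tangential and normal parts to get $\Delta_{h_+}(u|_Y)=(k+1)u-\tr\big(b(u)|_{(TY^{\perp})^2}\big)+\langle\nabla_{g_+}u,H\rangle$, and then control the last two terms by $u\,\beta^Y(u)$ and $\alpha u$ using the key bound $|\nabla_{g_+}u|\le u$; this is precisely the computation in the paper's proof of \cref{lower}. Where you genuinely differ is the concluding step. The paper feeds the test function $\varphi=(u|_Y)^{-s}$ into Barta's inequality \eqref{Barta2} (valid on complete noncompact manifolds by Cheng--Yau \cite{ChengS.Y.1975Deor}) and optimizes over $s$, the maximum occurring at $s=\tfrac{k-\alpha-\beta^Y(u)}{2}$; you instead set $\psi=\log(u|_Y)$, observe $\Delta_{h_+}\psi\ge\mu:=k-\alpha-\beta^Y(u)$ together with $|\nabla_{h_+}\psi|\le 1$, and run the McKean/Cheung--Leung integration-by-parts argument to obtain the Poincar\'e-type inequality $\big(\tfrac{\mu}{2}\big)^2\int_Y f^2\le\int_Y|\nabla_{h_+} f|^2$ for all compactly supported $f$, after which the variational characterization of $\lambda_{1,2}$ (applied on each bounded domain) finishes the job. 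Your route is self-contained --- no appeal to Barta's theorem or its noncompact extension is needed --- and it yields, as a byproduct, the explicit Poincar\'e inequality that generalizes Proposition 1 of \cite{CheungLeung-Fu2001Eefs}; the paper's route is shorter once \eqref{Barta2} is granted and mirrors Lee's original argument on the ambient manifold. One small wording caveat: the gradient bound $|\nabla_{g_+}u|\le u$ is not literally part of \cref{Lee-Eigenfunction} but a consequence, via Lee's Theorem A \cite{LeeJohnM.1995Tsoa}, of the nonnegative Yamabe hypothesis and the nonnegative-scalar-curvature normalization, as recorded around \eqref{KeyEstimate}; your use of it is exactly the paper's, so this does not affect correctness.
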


Let $\mathcal{L}(g_+)$ be the collection of all Lee-eigenfunctions (see Definition \ref{Lee-Eigenfunction}). As explained in \cref{LEE} (see Appendix), there is a one-to-one correspondence between $\mathcal{L}(g_+)$ and those metrics in the conformal infinity of $(X^{n+1},g_+)$ whose scalar curvature is nowhere negative on $M$. By taking the infimum of $\beta^Y(u)$ over $u\in\mathcal{L}(g_+)$, an improved version of (\ref{lower-estimate1}) is obtained:
\begin{corollary}\label{cor1}
    Let the setup be the same as that in Theorem \ref{lower}, and define $\hat{\beta}^Y:=\underset{u\in\mathcal{L}(g_+)}{\inf}\beta^Y(u)$. Then \begin{equation}\label{ImprovedLower}
    \left(\frac{k-\alpha-\hat{\beta}^Y}{2}\right)^2\leq \lambda_{1,2}(Y).
    \end{equation}
    Note that $\hat{\beta}^Y$ is an invariant of $\iota^*g_+$.
\end{corollary}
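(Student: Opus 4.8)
The plan is to obtain \eqref{ImprovedLower} as the optimized form of \Cref{lower}: the estimate \eqref{lower-estimate1} is valid for \emph{every} admissible Lee-eigenfunction, so one simply keeps the strongest such bound, which means minimizing $\beta^Y(u)$ over $u\in\mathcal L(g_+)$. The eigenvalue inequality is then a short limiting argument, and the one substantive point is the final assertion that the resulting number $\hat\beta^Y$ depends only on $\iota^*g_+$.

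For the inequality, set $\mathcal A:=\{u\in\mathcal L(g_+) : \alpha+\beta^Y(u)<k\}$, which is nonempty by the standing assumptions (the setup of \Cref{lower}). For each $u\in\mathcal A$, \Cref{lower} gives $\lambda_{1,2}(Y)\ge\bigl((k-\alpha-\beta^Y(u))/2\bigr)^{2}$. Since $\beta^Y(u)\ge 0$ for every $u\in\mathcal L(g_+)$ (the note closing \Cref{lower}) and since $\hat\beta^Y\le\beta^Y(u_0)<k-\alpha$ for any fixed $u_0\in\mathcal A$, the infimum defining $\hat\beta^Y$ is unchanged if taken only over $\mathcal A$, and $\hat\beta^Y\in[0,k-\alpha)$. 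The polynomial $\phi(s):=\bigl((k-\alpha-s)/2\bigr)^{2}$ is continuous and strictly decreasing on $(-\infty,k-\alpha]$, so passing to the supremum over $\mathcal A$ yields
\[
\lambda_{1,2}(Y)\ge\sup_{u\in\mathcal A}\phi\bigl(\beta^Y(u)\bigr)=\phi\Bigl(\inf_{u\in\mathcal A}\beta^Y(u)\Bigr)=\phi\bigl(\hat\beta^Y\bigr)=\left(\frac{k-\alpha-\hat\beta^Y}{2}\right)^{2},
\]
the middle equality being the continuity and monotonicity of $\phi$; since $k-\alpha-\hat\beta^Y>0$, this is a strictly positive lower bound, and it is \eqref{ImprovedLower}.

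For the invariance, I would first note that $\mathcal L(g_+)$ is canonically attached to $(X^{n+1},g_+)$: by the correspondence recalled in \cref{LEE} it is in bijection with the metrics of nowhere-negative scalar curvature in the conformal infinity of $g_+$, while $b(u)=\nabla^2_{g_+}u-ug_+$ and the orthogonal splitting $TX|_Y=TY\oplus TY^{\perp}$ involve no auxiliary choice. Hence $\hat\beta^Y$ is at least a well-defined invariant of the immersed submanifold. To upgrade this to dependence on $h_+:=\iota^*g_+$ alone, I would use the Gauss formula together with $\Delta_{g_+}u=(n+1)u$ to rewrite
\[
\tr\bigl(b(u)|_{(TY^{\perp})^{2}}\bigr)=(k+1)\,u-\Delta_{h_+}\!\bigl(u|_Y\bigr)+g_+\!\bigl(\vec H,\nabla_{g_+}u\bigr),
\]
and then, invoking the Einstein condition $\mathrm{Ric}(g_+)=-ng_+$, the boundary identification $[h_+]_{\infty}=(\iota|_{\partial \overline{Y}})^{*}[g_+]_{\infty}$, and the asymptotics $u=O(r^{-1})$, $\tr(b(u)|_{(TY^{\perp})^{2}})=O(1)$ as $r\to 0$, reduce $\beta^Y(u)$ --- and therefore, after the infimum, $\hat\beta^Y$ --- to a quantity constructed solely from $(Y,h_+)$. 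I expect this intrinsic reformulation, i.e.\ the invariance of $\hat\beta^Y$ under change of the ambient Poincar\'e-Einstein filling, to be the main obstacle; by contrast the eigenvalue estimate itself is routine, its only delicate points being the nonemptiness of $\mathcal A$ and the interchange of $\sup$ and $\inf$, both handled by the monotonicity of $\phi$ on $[0,k-\alpha)$.
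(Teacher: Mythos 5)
Your argument for the eigenvalue bound is essentially the paper's own proof: the paper simply applies \eqref{lower-estimate1} to every Lee-eigenfunction and takes the supremum of the resulting bounds, which is exactly your optimization of $\phi(\beta^Y(u))$ over admissible $u$; your extra care in restricting to $\mathcal A=\{u:\alpha+\beta^Y(u)<k\}$ and invoking monotonicity of $\phi$ on $[0,k-\alpha)$ only makes explicit a point the paper glosses over, and it is correct since $\beta^Y(u)\ge k-\alpha$ off $\mathcal A$ cannot lower the infimum. The concluding ``Note'' on invariance is left as a remark in the paper (its proof of \cref{cor1} does not address it), so your admittedly incomplete sketch of an intrinsic reformulation via $T=(k+1)\hat u-\Delta_{h_+}\hat u+g_+(H,\nabla_{g_+}u)$ is extra material rather than a gap in the required argument.
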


Estimate (\ref{ImprovedLower}) can be thought of as a generalization of Cheung-Leung's estimate (\ref{CL-estimate}) to submanifolds living in a general Poincar\'e-Einstein space $(X^{n+1},g_+)$ whose conformal infinity has non-negative Yamabe invariant. When the ambient space is $(\mathbb{H}^{n+1}(-1),g_H)$, we observe that $\hat\beta^Y=0$ for every $Y$ and recover Cheung-Leung's result:
\begin{corollary}\label{cor2}
Let $Y^{k+1}$ be a complete and noncompact submanifold in hyperbolic space $\mathbb{H}^{n+1}(-1)$. If the mean curvature vector of $Y$ has norm satisfying $|H|\leq \alpha< k$, then it follows that $ \left(\frac{k-\alpha}{2}\right)^2\leq\lambda_{1,2}(Y)$.\end{corollary}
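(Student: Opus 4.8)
The plan is to obtain \cref{cor2} as the special case of \cref{cor1} in which the ambient space is $\mathbb{H}^{n+1}(-1)$, the point being to verify that the invariant $\hat\beta^Y$ vanishes for every such $Y$. First I would confirm that the hypotheses of \cref{lower} and \cref{cor1} are met: $(\mathbb{H}^{n+1}(-1),g_H)$ is Poincar\'e-Einstein and its conformal infinity is the round sphere $(\mathbb{S}^n,[g_{\mathbb{S}^n}])$, which is of non-negative Yamabe type. Thus, for a complete noncompact immersion $\iota\colon Y^{k+1}\to\mathbb{H}^{n+1}(-1)$ with $\|H\|\le\alpha$, \cref{cor1} will apply as soon as we know $\alpha+\hat\beta^Y<k$.

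The crux is to exhibit one Lee-eigenfunction whose trace-free Hessian is identically zero. I would fix a point $p\in\mathbb{H}^{n+1}(-1)$ and set $\rho:=d_{g_H}(\,\cdot\,,p)$, $u:=\cosh\rho$. This $u$ is smooth on all of $\mathbb{H}^{n+1}(-1)$, since $\cosh$ is an even smooth function of $\rho^2$ and $\rho^2$ is smooth, and, as recalled in the introduction, $u$ is a Lee-eigenfunction (cf.\ \cref{Lee-Eigenfunction}): it solves $\Delta_{g_H}u=(n+1)u$ with $u\to+\infty$ at the boundary, so $u\in\mathcal{L}(g_+)$. The Hessian comparison in constant curvature $-1$ gives $\nabla_{g_H}^2\rho=\coth\rho\,(g_H-d\rho\otimes d\rho)$ on $\mathbb{H}^{n+1}(-1)\setminus\{p\}$; differentiating $u=\cosh\rho$ once more then yields $\nabla_{g_H}^2u=\cosh\rho\,g_H=u\,g_H$ there, and this identity extends across $p$ by continuity. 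Hence $b(u)=\nabla_{g_H}^2u-u\,g_H\equiv0$.

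Feeding this into the definitions, since $b(u)$ vanishes as a tensor on $\mathbb{H}^{n+1}(-1)$ its restriction to the normal spaces of $Y$ vanishes as well, so $\tr\!\bigl(b(u)|_{(TY^{\perp})^{2}}\bigr)\equiv0$ on $Y$ and therefore $\beta^Y(u)=0$; consequently $\hat\beta^Y=\inf_{v\in\mathcal{L}(g_+)}\beta^Y(v)\le0$. On the other hand, the observation recorded at the end of \cref{lower} gives $\beta^Y(v)\ge0$ for every $v\in\mathcal{L}(g_+)$, hence $\hat\beta^Y\ge0$; combining the two bounds yields $\hat\beta^Y=0$. Since $\alpha<k$ by hypothesis we then have $\alpha+\hat\beta^Y=\alpha<k$, so \cref{cor1} applies and gives $\left(\tfrac{k-\alpha}{2}\right)^2=\left(\tfrac{k-\alpha-\hat\beta^Y}{2}\right)^2\le\lambda_{1,2}(Y)$, as claimed.

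I do not expect a genuine obstacle. The only computation needed is the classical identity $\nabla_{g_H}^2\cosh\rho=\cosh\rho\cdot g_H$ for the distance function on hyperbolic space, and the only point demanding care is conceptual: a single test function $u$ with $b(u)\equiv0$ yields only $\hat\beta^Y\le0$, and it is the independently established lower bound $\beta^Y(v)\ge0$ from \cref{lower} that upgrades this to the exact value $\hat\beta^Y=0$, which is what makes \cref{cor1} reduce cleanly to Cheung-Leung's inequality.
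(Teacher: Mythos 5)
Your proposal is correct and follows essentially the same route as the paper: both take $u=\cosh(d_{g_H}(\cdot,p))$, use the identity $\nabla^2_{g_H}\cosh(x)=\cosh(x)\,g_H$ to get $b(u)\equiv 0$, hence $\beta^Y(u)=0$, and then invoke \cref{lower}/\cref{cor1} with $\alpha<k$. The only cosmetic differences are that the paper verifies the Lee-eigenfunction asymptotics explicitly (boundedness of $\cosh(x)-r^{-1}$ for $r$ agreeing with $1-|y|$ near the boundary) where you cite the introduction, and that the paper stops at $\beta^Y(u)=0$ rather than pinning down $\hat\beta^Y=0$, which is not needed for the inequality.
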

\begin{comment}{\color{red}In the minimal case, \cref{cor2} and \cref{cor3} together give: 
\begin{corollary}
     If $Y^{k+1}$ is a minimal conformally compact submanifold of $\mathbb{H}^{n+1}(-1)$, then $\left(\frac{k}{2}\right)^2= \lambda_{1,2}(Y)$.
\end{corollary}}
\end{comment}

As another consequence of Theorem \ref{lower}, we derive a stability result for complete and non-compact minimal hypersurfaces in Poincar\'e-Einstein spaces whose conformal infinity has non-negative Yamabe invariant. Indeed, recall that a minimal hypersurface $Y^n$ living in a Poincar\'e-Einstein space (i.e. $\mathrm{Ric}(g_+) = -ng_+$) is stable if 
\begin{equation}
    \int_Y(|\nabla_{h_+}f|^2 - (|B|^2 -n))f^2\;dv_{h_+}\ge 0
\end{equation}
for every compactly supported function $f$, where $B$ denotes the second fundamental form of $Y$. It is expected then that lower bounds on $\lambda_{1,2}(Y)$ together with upper bounds on $|B|^2$ will yield stability results; for instance, see section 5 in \cite{DoCarmo-Dj}. This generalizes a result of Seo in \cite{SeoKeom-Kyo2011SMHI} (see remark \ref{Seo}).

\begin{corollary}\label{Stability}
    Let $Y^n$ be a complete and non-compact minimal hypersurface in a Poincar\'e-Einstein space $(X^{n+1},g_+)$ where the conformal infinity of $g_+$ is of non-negative Yamabe type. If $|B|^2\leq\frac{(n-1 -\hat{\beta}^Y)^2}{4} + n$ at every point of $Y$ and $\hat \beta^Y\le n-1$, then $Y$ is stable. If  $(X^{n+1},g_+)=(\mathbb{H}^{n+1}(-1),g_H)$, then $|B|^2\le \frac{(n+1)^2}{4}$ everywhere implies stability.
\end{corollary}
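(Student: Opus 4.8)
The plan is to read this off from the eigenvalue lower bound of \cref{cor1} together with the variational characterization of $\lambda_{1,2}$. First I would take care of the dimension bookkeeping: a hypersurface $Y^n\subset X^{n+1}$ is a submanifold of dimension $k+1=n$, so $k=n-1$, and minimality means the mean curvature vector vanishes identically, so we may take $\alpha=0$ in \cref{lower} and \cref{cor1}. Assuming first that $\hat\beta^Y<n-1$, \cref{cor1} then gives
\[
\lambda_{1,2}(Y)\ \ge\ \left(\frac{n-1-\hat\beta^Y}{2}\right)^{2}.
\]

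Next I would unwind what this means as a Poincar\'e inequality on all of $C^\infty_o(Y)$: if $f\in C^\infty_o(Y)$ has support contained in a bounded domain $\Omega$ with smooth boundary, then the Rayleigh quotient of $f$ computed on $\Omega$ is at least $\lambda_{1,2}(\Omega)\ge \lambda_{1,2}(Y)$ by domain monotonicity (Lemma 1.1 of \cite{DuMao}), whence
\[
\int_Y |\nabla_{h_+} f|^2\,dv_{h_+}\ \ge\ \left(\frac{n-1-\hat\beta^Y}{2}\right)^{2}\int_Y f^2\,dv_{h_+}
\]
for every compactly supported $f$. Feeding this into the stability functional: the hypothesis $|B|^2\le \frac{(n-1-\hat\beta^Y)^2}{4}+n$ rearranges to $|B|^2-n\le \frac{(n-1-\hat\beta^Y)^2}{4}$, so for all compactly supported $f$,
\[
\int_Y\!\big(|\nabla_{h_+} f|^2-(|B|^2-n)f^2\big)dv_{h_+}\ \ge\ \int_Y |\nabla_{h_+} f|^2\,dv_{h_+}-\frac{(n-1-\hat\beta^Y)^2}{4}\int_Y f^2\,dv_{h_+}\ \ge\ 0,
\]
which is exactly stability of $Y$. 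The borderline case $\hat\beta^Y=n-1$ is handled directly: there the hypothesis forces $|B|^2\le n$ pointwise, so $|B|^2-n\le 0$ and the stability integrand is bounded below by $|\nabla_{h_+}f|^2\ge 0$.

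Finally, for the hyperbolic ambient case I would invoke the observation recorded around \cref{cor2} that $\hat\beta^Y=0$ for every $Y\subset\mathbb{H}^{n+1}(-1)$, since the round metric realizing the conformal infinity has positive constant scalar curvature and its associated Lee-eigenfunction has vanishing trace-free Hessian; substituting $\hat\beta^Y=0$ gives $|B|^2\le \frac{(n-1)^2}{4}+n=\frac{(n+1)^2}{4}$, as claimed. I do not expect any real obstacle here: the entire substantive content is already packaged in \cref{lower}/\cref{cor1}, and the only points needing a word of care are the passage from the infimum-over-domains definition of $\lambda_{1,2}(Y)$ to a Poincar\'e inequality valid on all of $C^\infty_o(Y)$, and the dimension shift $k=n-1$.
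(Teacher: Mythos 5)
Your proposal is correct and follows essentially the same route as the paper: invoke \cref{cor1} with $\alpha=0$ and $k=n-1$ to get $\lambda_{1,2}(Y)\ge\frac{(n-1-\hat\beta^Y)^2}{4}$, pass to the Poincar\'e inequality on compactly supported functions via the Rayleigh-quotient/domain-monotonicity characterization, and rearrange the stability integrand using $|B|^2-n\le\frac{(n-1-\hat\beta^Y)^2}{4}$, with $\hat\beta^Y=0$ in the hyperbolic case. Your explicit treatment of the borderline case $\hat\beta^Y=n-1$ is a small extra precaution the paper leaves implicit, but the substance is identical.
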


We now provide a geometric interpretation of the invariant $\hat\beta^Y$ introduced in Corollary \ref{cor1}. On any AH space $(X^{n+1},g_+)$, and for any defining function $r$, a unique $u$ solving $\Delta_{g_+}u=(n+1)u$ and satisfying $u-r^{-1} = O(1)$ exists; see discussion in Section \ref{LowerBound}. Therefore, it makes sense to compactify the metric $g_+$ using $u^{-1}$, that is, we consider the compact space $(\overline{X}^{n+1},u^{-2}g_+)$. It was shown by Qing in \cite{QingJie2003Otrf} that this compactification satisfies nice properties, allowing the author to obtain an important rigidity result for Poincar\'e-Einstein spaces whose conformal infinity is the round sphere. Inspired by the same approach, a simple calculation leads to the following:

\begin{proposition}\label{GeomInt} Let $(X^{n+1}, g_+)$ be a Poincar\'e-Einstein space and let $u$ be as described above. Set $g_u = u^{-2}g_+$. If $E_{g_u}$ denotes the trace-free Ricci tensor of $g_u$ and $b(u) = \nabla_{g_+}^2u - ug_+$ is the trace-free hessian of $u$, then $E_{g_u} = (n-1)\frac{b(u)}{u}$. Moreover, $b(u) = 0$ if and only if $R_{g_u}$ is constant.
\end{proposition}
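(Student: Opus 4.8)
The plan is to compute the Ricci tensor of the conformally changed metric $g_u = u^{-2}g_+$ directly from the standard conformal transformation formula, using crucially that $g_+$ is Einstein with $\mathrm{Ric}(g_+) = -n g_+$ and that $u$ satisfies $\Delta_{g_+} u = (n+1) u$. Recall that for a conformal change $g_u = e^{2\phi} g_+$ in dimension $n+1$, one has
\begin{equation*}
\mathrm{Ric}(g_u) = \mathrm{Ric}(g_+) - (n-1)\left(\nabla^2_{g_+}\phi - d\phi\otimes d\phi\right) - \left(\Delta_{g_+}\phi + (n-1)|\nabla_{g_+}\phi|^2\right) g_+.
\end{equation*}
Here $\phi = -\log u$, so $d\phi = -u^{-1} du$, $d\phi\otimes d\phi = u^{-2} du\otimes du$, and $\nabla^2_{g_+}\phi = -u^{-1}\nabla^2_{g_+} u + u^{-2} du\otimes du$. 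Substituting, the $du\otimes du$ terms arising from $\nabla^2_{g_+}\phi$ and from $-d\phi\otimes d\phi$ cancel, and one is left with $\nabla^2_{g_+}\phi - d\phi\otimes d\phi = -u^{-1}\nabla^2_{g_+} u$. Similarly $\Delta_{g_+}\phi = -u^{-1}\Delta_{g_+} u + u^{-2}|\nabla_{g_+} u|^2 = -(n+1) + u^{-2}|\nabla_{g_+} u|^2$.

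Next I would assemble these pieces: plugging in $\mathrm{Ric}(g_+) = -n g_+$ and the expressions above gives
\begin{equation*}
\mathrm{Ric}(g_u) = -n g_+ + (n-1)\frac{\nabla^2_{g_+} u}{u} - \left(-(n+1) + (n-1)\cdot 2\, u^{-2}|\nabla_{g_+} u|^2 \cdot \tfrac{?}{} \right)g_+,
\end{equation*}
so I should be careful with the $|\nabla_{g_+}\phi|^2 = u^{-2}|\nabla_{g_+} u|^2$ coefficient; collecting, the terms proportional to $g_+$ combine into a single scalar multiple of $g_+$, while the only non-$g_+$-proportional contribution is $(n-1)u^{-1}\nabla^2_{g_+} u$. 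Using $b(u) = \nabla^2_{g_+} u - u g_+$, i.e. $\nabla^2_{g_+} u = b(u) + u g_+$, the tensorial part becomes $(n-1)u^{-1} b(u) + (n-1) g_+$, and the scalar part is absorbed. Since $b(u)$ is trace-free with respect to $g_+$ (equivalently with respect to $g_u$, as trace-freeness is conformally invariant up to the conformal factor which here does not affect vanishing of the trace), the trace-free part $E_{g_u}$ of $\mathrm{Ric}(g_u)$ must equal the trace-free part of $(n-1)u^{-1}b(u)$, which is just $(n-1)u^{-1}b(u)$ itself. This yields $E_{g_u} = (n-1)\dfrac{b(u)}{u}$.

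For the final claim, $b(u) = 0$ clearly forces $E_{g_u} = 0$, so $g_u$ is Einstein on a compact manifold with boundary; combined with the fact that $g_u$ is a smooth metric on $\overline{X}$ (Qing's result, or the regularity of the compactification), its scalar curvature is constant by the contracted second Bianchi identity (in dimension $\ge 3$ an Einstein metric has constant scalar curvature). Conversely, if $R_{g_u}$ is constant, I would use the traced second Bianchi identity $\operatorname{div}_{g_u} E_{g_u} = \frac{n-1}{2(n+1)} dR_{g_u} = 0$ together with a Bochner/integration-by-parts argument, or more directly exploit the special structure $E_{g_u} = (n-1)u^{-1} b(u)$ and the equation satisfied by $u$ to conclude $b(u) \equiv 0$; alternatively one cites that for such compactifications constant scalar curvature is known to be equivalent to the vanishing of the trace-free Hessian. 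The main obstacle I anticipate is precisely this converse direction: showing constant $R_{g_u}$ implies $b(u)=0$ requires either a careful divergence computation using the relation between $\operatorname{div} b(u)$ and derivatives of $R_{g_u}$, or invoking the boundary regularity and a maximum-principle/unique-continuation argument, rather than the purely algebraic manipulation that suffices for the identity $E_{g_u} = (n-1)b(u)/u$.
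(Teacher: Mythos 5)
The identity $E_{g_u}=(n-1)u^{-1}b(u)$ is handled along essentially the same lines as the paper: conformal transformation of the Ricci tensor, the Einstein condition $\mathrm{Ric}(g_+)=-ng_+$, the eigenfunction equation $\Delta_{g_+}u=(n+1)u$, and the substitution $\nabla^2_{g_+}u=b(u)+ug_+$, after which every remaining term is a multiple of $g_+$ (hence of $g_u$) and drops out of the trace-free part. Two small points: your displayed formula still contains a ``$?$'' placeholder and should be assembled cleanly, and for the forward implication you do not need compactness of $\overline{X}$ or any regularity of the compactification --- an Einstein metric on the connected manifold $X$ of dimension $\geq 3$ has constant scalar curvature by the contracted second Bianchi identity alone.

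The genuine gap is the converse: you never prove that constancy of $R_{g_u}$ forces $b(u)=0$; you only list candidate strategies (divergence identity, integration by parts, citation) and explicitly flag this direction as the main obstacle. The paper closes it with a pointwise Bochner computation, and the difficulties you anticipate (boundary terms, maximum principle, unique continuation) do not arise. Indeed, tracing the transformed Ricci tensor gives $R_{g_u}=n(n+1)\bigl(u^2-|\nabla_{g_+}u|^2\bigr)$. Bochner's formula together with $\Delta_{g_+}u=(n+1)u$ and $\mathrm{Ric}(g_+)=-ng_+$ yields $\tfrac12\Delta_{g_+}|\nabla_{g_+}u|^2=|\nabla^2_{g_+}u|^2+\langle\nabla_{g_+}u,\nabla_{g_+}\Delta_{g_+}u\rangle+\mathrm{Ric}(\nabla_{g_+}u,\nabla_{g_+}u)=|\nabla^2_{g_+}u|^2+|\nabla_{g_+}u|^2$, while $\tfrac12\Delta_{g_+}(u^2)=(n+1)u^2+|\nabla_{g_+}u|^2$ and $|\nabla^2_{g_+}u|^2=|b(u)|^2+(n+1)u^2$. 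Combining these, $\Delta_{g_+}\bigl(u^2-|\nabla_{g_+}u|^2\bigr)=-2|b(u)|^2_{g_+}$ pointwise on $X$. If $R_{g_u}$ is constant, the left-hand side vanishes identically, so $b(u)\equiv 0$, with no global or boundary argument needed. This is exactly the content of the paper's remark that a constant $R_{g_u}$ is harmonic and Bochner then gives $|b(u)|^2_{g_+}=0$; without this step (or an equivalent precise citation), your ``if and only if'' is established in only one direction.
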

Recall that in the context of \cref{lower}, for a given $u\in\mathcal{L}(g_+),$ in order to obtain $\beta^Y(u)$, one first need to restrict $u^{-1}b(u) = (n-1)^{-1}E_{g_u}$ to the normal bundle of $Y$, take the trace with respect to $(g_+)|_{
TY^\perp}$, and then take the supremum over $Y$. The invariant $\hat \beta^Y$ is then $\hat \beta^Y = (n-1)^{-1}\underset{u\in \mathcal{L}(g_+)}{\inf} \text{tr}_{g_+}\left(E_{g_u}|_{TY^\perp}\right)$. An interesting problem is to investigate for which submanifolds $Y$ is $\hat\beta^Y=0$. For instance, we could ask if there are submanifolds for which $\hat\beta^Y>0$ and whether or not (\ref{ImprovedLower}) is optimal in such cases.

We close our introduction with a few remarks about the estimate (\ref{ACM-eigenvalue}) in Corollary \ref{cor3}, particularly in the case where $p=2$, while reviewing some previous work. In \cite{SeoKeom-Kyo2011SMHI}, the author works with complete non-compact minimal hypersurfaces $Y^n$ in $\mathbb{H}^{n+1}(-1)$ which are stable, that is, for which the second variation of the area functional with respect to compactly supported normal variations satisfy a sign condition (see condition ($1.2$) in \cite{SeoKeom-Kyo2011SMHI}). Under a finite assumption on the $L^2$-norm of the second fundamental form of such submanifolds, Seo managed to prove that 
\begin{equation}\label{Seo bound}
    \lambda_{1,2}(Y)\le n^2.
\end{equation}
The key element in their proof is the construction of a test function for the Rayleigh quotient in an arbitrary geodesic ball involving the norm of the second fundamental form - this is how the finite assumption and the stability condition come into play. 

In Fu-Tao's work \cite{FuTao}, they managed to improve Seo's upper bound (\ref{Seo bound}). In fact, under similar integrability and stability assumptions, they show that for a complete and non-compact hypersurface $Y^n$ in $\mathbb{H}^{n+1}(-1)$ it holds that
\begin{equation}
    \lambda_{1,2}(Y)\le \left(\frac{n-1}{2}\right)^2\left(1-\frac{|H|^2}{n^2}\right),
\end{equation}
where $H$ is the scalar mean curvature of $Y$. In particular, if it is minimal, then it gives the sharp upper bound. They also obtain the same upper bound for higher codimensional submanifolds $Y^{k+1}$ under the assumption that $H$ is parallel and that $\displaystyle\int_Y |\mathring{B}|^q\;dv_{h}<\infty$ for $q\ge k+1$, where $\mathring{B}$ is the traceless second fundamental form  of $Y$ and $h = \iota^*g_H$ is the induced metric from $(\mathbb{H}^{n+1},g_H)$; see Theorem 1 in \cite{FuTao}. This integrability assumption always holds: indeed, let $r$ be any defining function for $(\mathbb{H}^{n+1},g_H)$, and denote by $\bar h$ the metric on $Y$ induced from the compactified metric $\overline{g}_H = r^2g_H$. Then, denoting by $\mathring{\overline{B}}$ the traceless second fundamental form with respect to $\bar h$ and using conformal covariance, we deduce
\begin{equation}
\int_Y|\mathring{B}|^q\;dv_h = \int_Yr^q\left|\mathring{\overline{B}}\right|^q\;dv_{h} = \int_Y r^{q-(k+1)}\left|\mathring{\overline{B}}\right|^q \;dv_{\bar h}<\infty,
\end{equation}
where the inequality follows from the fact that $\bar h$ extends smoothly to the compact set $\overline{Y}$ and the integrand is continuous there.

Finally, we would like to reiterate that upper bounds for Dirichlet eigenvalues should only be influenced by the asymptotic behavior of the manifold at infinity; see the discussion prior to the statement of Theorem \ref{p-UpperBound}. In particular, no global integrability assumptions on the second fundamental form or stability assumptions should be necessary if the manifold satisfies certain asymptotics at infinity.  This is precisely the case when the submanifolds are asymptotically CMC and the ambient manifold is AH. 

\subsection{Organization of the paper} In section \ref{Preliminaries}, we explain the asymptotic properties of an AH manifold in terms of a special defining function $r$ for its boundary $M$. In particular, we describe how the metric can be written in normal form in a neighborhood of the boundary and the specific asymptotics of the volume form there. In section \ref{Proof-p-UpperBound}, we provide the proof of one of our main results, Theorem \ref{p-UpperBound}. The techniques herein are novel and self-contained. Finally, in section \ref{SubmanifoldsOfAH}, we furnish the proofs of results concerning eigenvalue estimates on submanifolds of asymptotically hyperbolic manifolds, as elucidated in section \ref{EESAH}. From this section, there are two aspects that we would like to highlight. In subsection \ref{ACM-Sec}, while proving Proposition \ref{ACM}, we noticed that the asymptotic norm of the mean curvature determines both the angles at which the submanifold meets the boundary at infinity and the asymptotic sectional curvature, which could be of independent interest. On the other hand, in section \ref{LowerBound}, we explain what we refer to as a Lee-eigenfunction. 

The reader will notice that we employ two type of test functions throughout our work: $u^{-s}$ and $r^{s'}$ for particular $s,s'\in \mathbb{R}$. For the upper bounds, we utilize $r^{s'}$ as we only need to know what happens at infinity. For lower bounds, we employ $u^{-s}$ since we require not only good asymptotics at infinity ($u-r^{-1} = O(1)$) but also interior information ($\Delta_{g_+}u = (n+1)u$ everywhere on $X$).

\subsection{Acknowledgements}
This work was initiated during  the Internatio-nal Doctoral Summer School In Conformal Geometry and Non-local Operators at the Instituto de Matem\'aticas de la Universidad de Granada (IMAG). We thank the organizers, Azahara DelaTorre Pedraza and Mar\'ia del Mar Gonz\'alez, for their kind invitation. The first author was supported by the NSF grant RTG-DMS-1502424 while in his previous institution.

\section{Preliminaries for Estimates on Asymptotically Hyperbolic Manifolds}\label{Preliminaries}

Recall that given a compact manifold $\overline{X}^{n+1}$ with boundary $M$ and interior $X$, a complete Riemannian metric $g_+$ on $X$ is said conformally compact if $\overline{g}:=r^2g_+$ extends to a metric on $\overline{X}$. Here $r$ is a defining function for $M$, that is, a positive function on $X$ which vanishes on $M$ and with non-vanishing gradient on $M$. Furthermore, if the asymptotic sectional curvatures approach $-1$ at the boundary, then we say that $(X^{n+1},g_+)$ is asymptotically hyperbolic. 
 
 A defining function $r$ determines for some $r_0>0$ an identification of $M\times [0,r_0)$ with a neighborhood of $M$ in $X$, as follows:  any $(p,t)\in M\times [0,r_0)$ corresponds to $\phi(p,t),$ where $\phi$ is the flow of $\nabla_{\overline{g}}r.$ If  $r$ is a special defining function, that is, if $r$ is such that $|\nabla_{g_+}r|^2 =r^2$ in a neighborhood of $M$, then $r(\phi(p,t))=t$. This means that we can think about the $t$ coordinate as just being $r$ and $\nabla
_{\overline{g}}r$ is orthogonal to the slices $M\times \{t\}.$ The metric $\overline{g}$ then takes the form \begin{equation}\overline{g}=g_r+dr^2\end{equation} where $g_r$ is a 1-parameter family of metrics on $M$. It can be shown that
\begin{equation}
g_r=g_{0}-2Lr+O(r^2),
\end{equation}
where $L$ is the second fundamental form of $M$ with respect to $\overline{g}$ (see \cite{Gursky-Graham}).
In $M\times [0,r_0)_r$, we can write the volume form of $g_+$ as
\begin{equation}\label{VolumeExp1}
   dv_{g_+}=r^{-n-1} \bigg(\frac{\mathrm{det}g_r}{\mathrm{det} g_0}\bigg)^{1\slash  2} dv_{g_0} dr.
\end{equation}
Then using Jacobi's formula 
\begin{equation}
    \frac{d}{dt}(\mathrm{det} A(t))= (\mathrm{det} A(t))*\mathrm{trace}(A^{-1}(t)A'(t)),
\end{equation}
we deduce
\begin{equation}\label{VolumeExp2}
\bigg(\frac{\mathrm{det}g_r}{\mathrm{det}g_0}\bigg)^{1\slash  2}=1+\nu^{(1)}r+O(r^2)
\end{equation}
where $\nu^{(1)}=-H^M$ and $H^M$ is the mean curvature of $M$ with respect to $\overline{g}$.  

\section{Upper Bounds for \texorpdfstring{$\lambda_{1,p}(X)$} - Proof of Theorem \ref{p-UpperBound} and of Corollary \ref{p-UpperBoundGeneral}}\label{Proof-p-UpperBound}

\subsection{Proof of Theorem \ref{p-UpperBound}}
Our strategy to prove Theorem \ref{p-UpperBound} is quite standard. Using the variational characterization of $p$-Dirichlet eigenvalues, given in (\ref{p-Eigenvalue}), we compute the Rayleigh quotient for a suitable test function that will yield the sharp bound. 

In what follows, we make our approach precise. Let $r_0$ as in Section \ref{Preliminaries}. For $\gamma\in (0,r_0)$, consider the bounded domains given by 
\begin{equation}
X_\gamma:= \{x\in X^{n+1}: r(x)\ge \gamma\}.
\end{equation}
Notice that the boundary of each $X_\gamma$ is merely the $\gamma$-level set of $r$, and so it is a smooth hypersurface thanks to our choice of $r_0$. By the domain monotonicity property of $p$-Dirichlet eigenvalues (Lemma 1.1 in \cite{LEAn2006Epft}), we have that $\lambda_{1,p}(X_\gamma)$ decreases monotonically to $\lambda_{1,p}(X)$ as $\gamma\to 0^+$. Moreover, 
\begin{equation}\label{VarChar2}
\lambda_{1,p}(X)\leq\lambda_{1,p}(X_{\gamma})= \underset{f\in W^{1,p}_0(X_{\gamma})\setminus\{0\}}{\text{inf}}\frac{\displaystyle\int_{X_{\gamma}}|\nabla_{g_+}|f||^p\;dv_{g_+}}{\displaystyle\int_{X_{\gamma}}|f|^p\;dv_{g_+}}.
\end{equation}

We want to use $r^{\frac{n}{p}}$ as a test function because a special defining function should capture the behavior of $X$ at infinity. The issue is that $r$ is positive everywhere in the interior of $\overline{X}$ and so it cannot be used as a test function. However, we can multiply it by a suitable cut-off $\phi$ in such a way that we capture the same behavior near $M$. This cut-off function is indexed by two positive parameters $\epsilon$ and $\delta$, and it is define as follows: for $\epsilon\in(0,r_0)$ and $\delta>0$ small enough, let $\phi_{\epsilon\delta}\colon \overline{X}\to \mathbb{R}$ be 
\begin{equation} 
\phi_{\epsilon\delta}(p):=\begin{cases} 
      1, & \quad\epsilon\leq r(p) \\
      \frac{1+\delta}{\epsilon}(r(p)-\frac{\epsilon\delta}{1+\delta}), & \frac{\epsilon\delta}{1+\delta}\leq r(p)\leq \epsilon \\
      0, & \quad 0\leq r(p)\leq   \frac{\epsilon\delta}{1+\delta}.
   \end{cases}
\end{equation}

We also need to introduce some big O notation that will make some of the computations clearer. We say that a function $w$ is $O(a,b) = O(a(\epsilon), b(\delta))$ if there exist positive constants $A$, $B$, $\epsilon_o$ and $\delta_o$ such that $|w(\epsilon,\delta)|\le A|a(\epsilon)|$ and $|w(\epsilon,\delta)|\le B|b(\delta)|$ for all $0\le \epsilon<\epsilon_o$ and for all $0\le \delta<\delta_o$. For instance, a function $w(\epsilon,\delta)$ being $O(\epsilon^2,1)$ means that, in particular, the function remains bounded as $\delta\to 0^+$ while for any fixed $\delta>0$  it is $O(\epsilon^2)$ as $\epsilon\to 0^+$. A function is $O(1,1)$ if it remains bounded as both $\epsilon\to 0^+$ and $\delta\to 0^+$. As in other sections of our work, we employ classical big O notation as well. For instance, a function $t(r)$ is $O(r)$ if there exist positive constants $R$ and $r_o$ such that $|t(r)|\le R\cdot r$ for all $0\le r\le r_o$.

\begin{proof}[Proof of Theorem \ref{p-UpperBound}]
Set
\begin{equation}  q_p({\epsilon,\delta},s):=\frac{\displaystyle\int_{X}|\nabla_{g_+}( r^s\phi_{\epsilon\delta})|^p\;dv_{g_+}}{\displaystyle\int_{X} r^{ps}\phi_{\epsilon\delta}^p\;dv_{g_+}} = \frac{N_p(\epsilon,\delta,s)}{D_p(\epsilon,\delta,s)},
\end{equation}
where $\frac{n-p}{p} < s < \frac{n}{p}$, and notice that thanks to (\ref{VarChar2}) we have
\begin{equation}\label{Eigenvalue-qfunction}
    \lambda_{1,p}(X)\le \lim_{\epsilon\to 0^+}\lim_{\delta\to 0^+}q_p(\epsilon,\delta,s).
\end{equation}
Also, combining (\ref{VolumeExp1}) and (\ref{VolumeExp2}), we get 
\begin{equation}\label{VolumeExp3}
dv_{g_+} = r^{-n-1}(1-H^Mr+O(r^2))\; dv_{g_0}dr = r^{-n-1}[1+O(r)]\; dv_{g_0}dr.
\end{equation}
The range in the values of $s$ will be explained as we proceed with the proof.

We begin by studying the denominator in $q_p(\epsilon,\delta,s)$:
\[
\begin{split}
D_p(\epsilon,\delta,s) =& \int_{\{r\ge r_0\}} r^{ps}\;dv_{g_+} + \int_\epsilon^{r_0} \int_M   r^{ps-n-1}[1+O(r)]\; dv_{g_0}dr \\ &\;+ \left(\frac{1+\delta}{\epsilon}\right)^p\int^\epsilon_{\frac{\epsilon\delta}{1+\delta}}\int_M \left(r - \frac{\epsilon\delta}{1+\delta}\right)^p\cdot r^{ps-n-1}[1+O(r)]\;dv_{g_0}dr \\ =&\; O(1,1) + \frac{\text{Vol}_{g_0}(M)}{ps-n}\cdot \left[ r^{ps-n}\right|_\epsilon^{r_o} + O(\epsilon^{ps-n+1},1) \\ &\;+ \left(\frac{1+\delta}{\epsilon}\right)^p\int^\epsilon_{\frac{\epsilon\delta}{1+\delta}} \int_M\left(r - \frac{\epsilon\delta}{1+\delta}\right)^p\cdot r^{ps-n-1}[1+O(r)]\;dv_{g_0}dr
\end{split}
\]
For $\frac{\epsilon \delta}{1+\delta}<r<\epsilon$, we can expand 
\[
    \left(r - \frac{\epsilon\delta}{1+\delta}\right)^p = r^p - \left(pr^{p-1}\epsilon \left(\frac{\delta}{1+\delta}\right) - \frac{p(p-1)}{2}r^{p-2}\epsilon^2\left(\frac{\delta}{1+\delta}\right)^2 +\cdots \right).
\]
Therefore, $D_p(\epsilon,\delta,s)$ equals
\[
\begin{split}
&\; O(1,1) - \frac{\text{Vol}_{g_0}(M)}{ps-n}\epsilon^{ps-n} + O(\epsilon^{ps-n+1},1) \\ &+ \left(\frac{1+\delta}{\epsilon}\right)^p\int_{\frac{\epsilon\delta}{1+\delta}}^\epsilon\int_M \Bigg{\{}r^{p+ps-n-1} - pr^{p+ps-n-2}\epsilon \left(\frac{\delta}{1+\delta}\right)+\cdots \Bigg{\}}\times\\&
\hspace{3.5in}[1+O(r)]\;dv_{g_0}dr \\ =&\; O(1,1) - \frac{\text{Vol}_{g_0}(M)}{ps-n}\epsilon^{ps-n} + O(\epsilon^{ps-n+1},1)+ O(\epsilon^{ps-n+1},\delta) \\ &+ \text{Vol}_{g_0}(M)\left(\frac{1+\delta}{\epsilon}\right)^p \cdot \Bigg{[}\frac{r^{p+ps-n}}{p+ps-n}\\&\hspace{2in} - p\epsilon\left(\frac{\delta}{1+\delta}\right)\frac{r^{p+ps-n-1}}{p+ps-n-1} +\cdots\Bigg{|}^\epsilon_{\frac{\epsilon\delta}{1+\delta}} \\=&\;O(1,1) - \frac{\text{Vol}_{g_0}(M)}{ps-n}\epsilon^{ps-n} + O(\epsilon^{ps-n+1},1)+ O(\epsilon^{ps-n},\delta^{p+ps-n}) \\ &+\text{Vol}_{g_0}(M)\left(\frac{1+\delta}{\epsilon}\right)^p \frac{1}{p+ps-n}\left(\epsilon^{p+ps-n} - \left(\frac{\epsilon\delta}{1+\delta}\right)^{p+ps-n}\right) \\ &-\text{Vol}_{g_0}(M)\left(\frac{1+\delta}{\epsilon}\right)^p \frac{p\epsilon}{p+ps-n-1}\left(\frac{\delta}{1+\delta}\right)\times\\&\hspace{2in}\left(\epsilon^{p+ps-n-1} - \left(\frac{\epsilon\delta}{1+\delta}\right)^{p+ps-n-1}\right) \\ =&\; O(1,1) - \frac{\text{Vol}_{g_0}(M)}{ps-n}\epsilon^{ps-n} + O(\epsilon^{ps-n+1},1) + \text{Vol}_{g_0}(M)\frac{(1+\delta)^p}{p+ps-n}\epsilon^{ps-n} \\&- \text{Vol}_{g_0}(M)\frac{\delta^{p+ps-n}}{p+ps-n}\left(\frac{\epsilon}{1+\delta}\right)^{ps-n}\\& - \text{Vol}_{g_0}(M)(1+\delta)^{p-1}\frac{p\epsilon^{ps-n}\delta}{p-ps-n-1}\\ &+\text{Vol}_{g_0}(M) \frac{p\delta^{p+ps-n}}{p+ps-n-1}\left(\frac{\epsilon}{1+\delta}\right)^{ps-n} + O(\epsilon^{ps-n},\delta^{p+ps-n}).
\end{split}
\]
That is, $D_p(\epsilon,\delta,s)$ equals
\begin{equation}\label{Denominator1}
\begin{split}
   O(1,1) &+ \text{Vol}_{g_0}(M)\epsilon^{ps-n}\left(\frac{(1+\delta)^p}{p+ps-n} - \frac{1}{ps-n}\right) + O(\epsilon^{ps-n+1},1) \\&+O(\epsilon^{ps-n},\delta^{p+ps-n}).
\end{split}
\end{equation}
Notice that some of the powers in $\delta$ are $p+ps-n$. Later we will take the limit as $\delta \to 0^+$, and so we need $p+ps-n>0$, that is, we need $s>\frac{n-p}{p}$. 

We turn our focus to the numerator in $q_p(\epsilon,\delta,s)$. Recall that $r$ is a special defining function, and thus $|\nabla_{g_+}r|^2 = r^2$ in a collar neighborhood of $M$. Then $N_p(\epsilon,\delta,s)$ equals
\[
\begin{split}
     &\;\int_{X}|\nabla_{g_+} (r^s\phi_{\epsilon\delta})|^pdv_{g_+} \\=& \int_{\{r\ge r_0\}}|\nabla_{g_+}(r^s)|^p\;dv_{g_+}+ \int_{X\setminus\{r\ge r_0\}} | r^s\nabla_{g_+} \phi_{\epsilon\delta} +s r^{s-1}\phi_{\epsilon\delta}\nabla_{g_+} r|^p\;dv_{g_+}\\ =&\; O(1,1)+\int_{X\setminus\{r\ge r_0\}} (r^{2s}|\nabla_{g_+} \phi_{\epsilon\delta}|^2 + s^2r^{2(s-1)}\phi_{\epsilon\delta}^2|\nabla_{g_+} r|^2\\& \hspace{2in}+ 2sr^{2s-1}\phi_{\epsilon\delta}\langle\nabla_{g_+}r,\nabla_{g_+}\phi_{\epsilon\delta}\rangle_{g_+})^{\frac{p}{2}}\;dv_{g_+} \\ =&\; O(1,1) +  \int_\epsilon^{r_o} \int_M s^p r^{p(s-1)}r^pr^{-n-1}[1+O(r)]\;dv_{g_o}dr\\ &+ \int_{\frac{\epsilon\delta}{1+\delta}}^\epsilon \int_M (r^{2s}|\nabla_{g_+} \phi_{\epsilon\delta}|^2 + s^2r^{2(s-1)}\phi_{\epsilon\delta}^2r^2\\&\hspace{1in} + 2sr^{2s-1}\phi_{\epsilon\delta}\langle\nabla_{g_+}r,\nabla_{g_+}\phi_{\epsilon\delta}\rangle_{g_+})^{\frac{p}{2}}r^{-n-1}[1+O(r)]\;dv_{g_0}dr \\=&\; O(1,1)  - \frac{\text{Vol}_{g_0}(M)}{ps-n}s^p\epsilon^{ps-n} +O(\epsilon^{ps-n+1},1)\\ &+ \int_{\frac{\epsilon\delta}{1+\delta}}^\epsilon \int_M (r^{2s}|\nabla_{g_+} \phi_{\epsilon\delta}|^2 + s^2r^{2(s-1)}\phi_{\epsilon\delta}^2r^2\\&\hspace{1in} + 2sr^{2s-1}\phi_{\epsilon\delta}\langle\nabla_{g_+}r,\nabla_{g_+}\phi_{\epsilon\delta}\rangle_{g_+})^{\frac{p}{2}}r^{-n-1}[1+O(r)]\;dv_{g_0}dr
\end{split}
\]
Using $\nabla_{g_+}\phi_{\epsilon\delta} = \frac{1+\delta}{\epsilon}\nabla_{g_+}r$ on $\frac{\epsilon\delta}{1+\delta}< r< \epsilon$, the previous expression simplifies to
\begin{equation}\label{Numerator1}
\begin{split}
&\; O(1,1) - \frac{\text{Vol}_{g_0}(M)}{ps-n}s^p\epsilon^{ps-n} +O(\epsilon^{ps-n+1},1) \\&+ \int_{\frac{\epsilon\delta}{1+\delta}}^\epsilon \int_M \Bigg{(}r^{2s+2}\left(\frac{1+\delta}{\epsilon}\right)^2+ s^2r^{2s}\left(\frac{1+\delta}{\epsilon}\right)^2\left(r - \frac{\epsilon\delta}{1+\delta}\right)^2\\&\hspace{1in}+2sr^{2s+1}\left(\frac{1+\delta}{\epsilon}\right)^2\left(r - \frac{\epsilon\delta}{1+\delta}\right)\Bigg{)}^{\frac{p}{2}}r^{-n-1}[1+O(r)]\;dv_{g_0}dr
\end{split}
\end{equation}
We now proceed to simplify the main factor in the integrand:
\[
\begin{split}
&r^{2s+2}\left(\frac{1+\delta}{\epsilon}\right)^2 + s^2r^{2s}\left(\frac{1+\delta}{\epsilon}\right)^2\left(r - \frac{\epsilon\delta}{1+\delta}\right)^2\\&\hspace{1.5in}+2sr^{2s+1}\left(\frac{1+\delta}{\epsilon}\right)^2\left(r - \frac{\epsilon\delta}{1+\delta}\right) \\ &= \left(\frac{1+\delta}{\epsilon}\right)^2\Bigg{\{}r^{2s+2}+s^2r^{2s}\left(r^2-\frac{2r\epsilon\delta}{1+\delta} + \left(\frac{\epsilon\delta}{1+\delta}\right)^2\right) \\&\hspace{2.5in}+ 2sr^{2s+1}\left(r-\frac{\epsilon\delta}{1+\delta}\right)\Bigg{\}}\\&= \left(\frac{1+\delta}{\epsilon}\right)^2\Bigg{\{}r^{2s+2}+s^2r^{2s+2} - \frac{2s^2r^{2s+1}\epsilon\delta}{1+\delta} + \frac{s^2r^{2s}\epsilon^2\delta^2}{(1+\delta)^2}\\&\hspace{2.5in} + 2sr^{2s+2} - \frac{2sr^{2s+1}\epsilon\delta}{1+\delta}\Bigg{\}} \\ &=\left(\frac{1+\delta}{\epsilon}\right)^2\left\{r^{2s+2}(s^2+2s+1) -\frac{2r^{2s+1}\epsilon\delta}{1+\delta}(s^2+s)  + \frac{s^2r^{2s}\epsilon^2\delta^2}{(1+\delta)^2}\right\} \\ &= \left(\frac{1+\delta}{\epsilon}\right)^2r^{2s+2}(s^2+2s+1) + \epsilon^{-1}r^{2s+1}\cdot O(1,\delta) + r^{2s}\cdot O(1,\delta^2)
\end{split}
\]

Recall that we will first take the limit $\delta\to 0^+$. Therefore, we rewrite the previous expression as 
\begin{equation}\label{Numerator4}
    \left(\frac{1+\delta}{\epsilon}\right)^2r^{2s+2}(s^2+2s+1) + r^{2s+1}O(\delta) + r^{2s}O(\delta^2).
\end{equation}
After raising to the power of $\frac{p}{2}$, we obtain
\[
    \begin{split}
        &\left(\frac{1+\delta}{\epsilon}\right)^pr^{ps+p}(s^2+2s+1)^{\frac{p}{2}} + \frac{p}{2}r^{(2s+2)(\frac{p}{2} - 1)}(r^{2s+1}O(\delta) + r^{2s}O(\delta^2))\\ &\hspace{.2in}+ \frac{1}{2}\cdot\frac{p}{2}\left(\frac{p}{2}-1\right)r^{(2s+2)(\frac{p}{2} - 2)}(r^{2s+1}O(\delta) + r^{2s}O(\delta^2))^2+\cdots \\ &=\left(\frac{1+\delta}{\epsilon}\right)^pr^{ps+p}(s^2+2s+1)^{\frac{p}{2}} + \frac{p}{2}r^{(2s+2)(\frac{p}{2} - 1)}(r^{2s+1}O(\delta) + r^{2s}O(\delta^2))\\  &\hspace{.2in} + \frac{1}{2}\cdot\frac{p}{2}\left(\frac{p}{2}-1\right)r^{(2s+2)(\frac{p}{2} - 2)}(r^{2s+1}O(\delta) + r^{2s}O(\delta^2))^2+\cdots \\ &= \left(\frac{1+\delta}{\epsilon}\right)^pr^{ps+p}(s^2+2s+1)^{\frac{p}{2}} + \frac{p}{2}(r^{ps+p-1}O(\delta)+ r^{ps+p-2}O(\delta^2))+\cdots,
    \end{split}
\]
and notice that, starting from the second term above, we have a Laurent expansion in both $r$ and $\delta$ such that the exponent of $r$ and that of $\delta$ in each term always adds up to $ps+p$. Moreover, the power of $\delta$ is positive. Now, multiplying by $r^{-n-1}[1+O(r)]$, gives
\[
    \begin{split}
        &\left(\frac{1+\delta}{\epsilon}\right)^pr^{ps+p-n-1}(s^2+2s+1)^{\frac{p}{2}} + \frac{p}{2}(r^{ps+p-n-2}O(\delta)+ r^{ps+p-n-3}O(\delta^2))\\ &\hspace{.1in}+F_1(r,\delta) + \epsilon^{-p}O(r^{ps+p-n})+ O(r^{ps+p-n-1},\delta) + O(r^{ps+p-n-2},\delta^2)\\ &\hspace{.1in}+ F_2(r,\delta),
    \end{split}
\]
where $F_1(r,\delta)$ is a Laurent expansion in both $r$ and $\delta$ such that in each term the exponents of $r$ and $\delta$ add up to $ps+p-n-1$, while $F_2(r,\delta)$ is the same but the powers add up to $ps+p-n$. Once again, the power of $\delta$ is positive in every term of the expansions $F_1$ and $F_2$. 

Recall that $ps+p-n>0.$ Therefore we have the following two cases to consider. First, let us assume that $ps+p-n-k\not=-1$ for all $k\in \mathbb{N}\setminus\{1\}$. Performing the double integral in (\ref{Numerator1}) gives 
\begin{equation}
    \begin{split}
    &\frac{\text{Vol}_{g_0}(M)}{ps+p-n}\left(1+\delta\right)^p(s^2+2s+1)^{\frac{p}{2}}\epsilon^{ps-n}\left(1 - \left(\frac{\delta}{1+\delta}\right)^{ps+p-n}\right)\\ &\hspace{.2in} + [r^{ps+p-n-1}\Big{|}^\epsilon_{\frac{\epsilon\delta}{1+\delta}} \cdot O(\delta) + [r^{ps+p-n-2}\Big{|}^\epsilon_{\frac{\epsilon\delta}{1+\delta}} \cdot O(\delta^2) + O(\delta^\alpha) \\ &\hspace{.2in} + O(\epsilon^{ps-n+1},1) + [r^{ps+p-n}\Big{|}^\epsilon_{\frac{\epsilon\delta}{1+\delta}}\cdot O(\delta) + [r^{ps+p-n-1}\Big{|}^\epsilon_{\frac{\epsilon\delta}{1+\delta}}\cdot O(\delta^2),
    \end{split}
\end{equation}
where $\alpha>0$. The terms coming from integrating $F_1$ and $F_2$ have both been absorbed into the term $O(\delta^\alpha)$. Taking the limit as $\delta\to 0^+$, we obtain
\begin{equation}\label{Numerator5}
    \frac{\text{Vol}_{g_0}(M)}{ps+p-n}(s^2+2s+1)^{\frac{p}{2}}\epsilon^{ps-n} + O(\epsilon^{ps-n+1},1).
\end{equation}
On the other hand, if $ps+p-n-k=-1$ for some $k\in\mathbb{N}\setminus \{1\}$, then log terms will appear somewhere after the first term. However, in those cases, the contribution will be of the form $\log(\delta)\cdot O(\delta^\alpha)$, where $\alpha>0$. Therefore, after taking $\delta\to 0^+$, we still get (\ref{Numerator5}).

Combine (\ref{Numerator1}) and (\ref{Numerator5}) to get
\begin{equation}\label{Numerator3}
\begin{split}
    \lim_{\delta\to 0^+} N_p(\epsilon,\delta,s) =\; O(1,1) &- \frac{\text{Vol}_{g_0}(M)}{ps-n} s^p\epsilon^{ps-n} + O(\epsilon^{ps-n+1},1) \\&+\frac{\text{Vol}_{g_0}(M)}{ps+p-n}(s^2+2s+1)^{\frac{p}{2}}\epsilon^{ps-n}.
\end{split}
\end{equation}
Putting (\ref{Denominator1}) and (\ref{Numerator3}) together gives
\[
\begin{split}
    \lim_{\delta\to 0^+}&q_p(\epsilon,\delta,s)\cdot\Bigg{\{}O(1,1) + \text{Vol}_{g_0}(M)\epsilon^{ps-n}\left(\frac{1}{p+ps-n} - \frac{1}{ps-n}\right) \\&\hspace{2in}+ O(\epsilon^{ps-n+1},1)\Bigg{\}} \\&= O(1,1) - \frac{\text{Vol}_{g_0}(M)}{ps-n} s^p\epsilon^{ps-n} + O(\epsilon^{ps-n+1},1) \\&\hspace{2in}+ \frac{\text{Vol}_{g_0}(M)}{ps+p-n}(s^2+2s+1)^{\frac{p}{2}}\epsilon^{ps-n}.
\end{split}
\]
Let us recall that $n - ps>0$. Multiply across by $\epsilon^{n-ps}$, let $\epsilon\to 0^+$ and divide by $\text{Vol}_{g_0}(M)$ to obtain
\begin{equation}
\lim_{\epsilon\to 0^+}\lim_{\delta\to 0^+}q_p(\epsilon,\delta,s)\cdot \left\{\frac{1}{p+ps-n} - \frac{1}{ps-n}\right\} = \frac{(s^2+2s+1)^{\frac{p}{2}}}{p+ps-n} - \frac{s^p}{ps-n}
\end{equation}
Going back to (\ref{Eigenvalue-qfunction}), we conclude that 
\begin{equation}
\lambda_{1,p}(X)\le s^p\left(s+1-\frac{n}{p}\right)+ (1+2s+s^2)^{\frac{p}{2}} \left(\frac{n}{p} - s\right)
\end{equation}
for all $\frac{n-p}{p}<s<\frac{n}{p}$. Letting $s\to \frac{n-p}{p}^+$ or $s\to \frac{n}{p}^-$ yields the desired upper bound. 
\end{proof}

\subsection{Proof of Corollary \ref{p-UpperBoundGeneral}}
\begin{proof}[Proof of Corollary \ref{p-UpperBoundGeneral}]
Let $r$ be a defining function for $M$, and recall that $\bar g$ denotes the compactified metric $r^2g_+$, while $g$ denotes the complete metric $g_+$. Using the transformation law \eqref{TransformationRiemannTensor} for $\kappa^2g_+ = r^{-2}(\kappa^2\bar g)$, we obtain 
\begin{equation}
\begin{split}
 R_{ijkl}^{\kappa^2g_+}&=-|dr|^2_{\kappa^2\overline{g}}((\kappa^2g)_{ik}(\kappa^2g)_{jl}-(\kappa^2g)_{il}(\kappa^2g)_{jk})+O_{ijkl}(r^{-3})\\ &= -(\kappa^{-2}|d r|^2_{\overline{g}})((\kappa^2g)_{ik}(\kappa^2g)_{jl}-(\kappa^2g)_{il}(\kappa^2g)_{jk})+O_{ijkl}(r^{-3})
 \end{split}
 \end{equation}
By assumption the manifold $(X^{n+1},g_+)$ is AH$(-\kappa^2)$. As explained in the introduction, this means that $|dr|^2_{\bar g}|_{M} = \kappa^2 \iff \kappa^{-2}|d r|^2_{\bar g}|_{M} = 1$. This allow us to conclude that the sectional curvatures of $(X^{n+1},\kappa^2g_+)$ approach $-1$ at $M$, that is, $(X^{n+1},\kappa^2g_+)$ is asymptotically hyperbolic. Therefore, it follows from Theorem \ref{p-UpperBound} that $\lambda_{1,p}(X,\kappa^2g_+)\le \left(\frac{n}{p}\right)^p$. Finally, by the variational characterization $(\ref{p-Eigenvalue})$ of the first $p$-Dirichlet eigenvalue, the scaling property
\[
    \lambda_{1,p}(X,\kappa^2g_+) = \inf_\Omega \lambda_{1,p}(\Omega, \kappa^2g_+) = \kappa^{-p}\inf_\Omega \lambda_{1,p}(\Omega,g_+)\\ = \kappa^{-p}\lambda_{1,p}(X,g_+)
\]
holds and the result follows.\end{proof}

\section{Estimates on Submanifolds - Proofs}\label{SubmanifoldsOfAH}

\subsection{Proof of Proposition\ref{ACM}}\label{ACM-Sec}

We start with some preliminaries. Let $(x^{\alpha})$ be a coordinate system on a neighborhood of $Y$ containing a neghbourhood of the boundary of $Y$ within $Y.$ Let $(\mu_{\alpha'})$ be an orthonormal frame for $NY$ with respect to $g_+$ that is adapted in the following sense: if $g(\nabla_{\overline{g}}r,\mu_{\alpha'})<0$, then replace $\mu_{\alpha'}$ with $-\mu_{\alpha'}$. Let $\overline{\mu}_{\alpha'}=r^{-1}\mu_{\alpha'};$ this is a unit-normal vector for $\overline{g}$. Let $W$ and $Z$ be tangent vectors to $Y$ at a point and $\overline{W}$ and $\overline{Z}$ be any extensions to vector fields to $X$ which restrict at points of $Y$ to vector fields tangent to $Y.$ Then the vector-valued second fundamental form is given by
\begin{equation}
II(W,Z)=B^{\alpha'}(W,Z)\mu_{\alpha'}=\sum_{\alpha'}B_{\alpha'}(W,Z)\mu_{\alpha'},
\end{equation}
where 
\begin{equation}
B_{\alpha'}(W,Z)=g_+(\nabla^{g_+}_{\overline{W}}\overline{Z},\mu_{\alpha'}).
\end{equation}
For $q\in Y$, let $P_q\colon T_qX\to T_qY^{\perp}$ be the canonical normal-projection map. More explicitly, with respect to our chosen frame, this canonical projection takes the form $[v\mapsto \sum_{\alpha'}g(v,\mu_{\alpha'})\mu_{\alpha'}].$ We point out that this projection is independent of our choice of frame. Notice that $\sum_{\alpha'}g(v,\mu_{\alpha'})\mu_{\alpha'}=\sum_{\alpha'}\overline{g}(v,\overline{\mu}_{\alpha'})\overline{\mu}_{\alpha'}$, and observe that
\begin{equation}\label{Proj}
\sum_{\alpha'} \overline{g}(\overline{\mu}_{\alpha'},\nabla_{\overline{g}}r)^2=|P(\nabla_{\overline{g}}r)|^2_{\overline{g}}\leq |\nabla_{\overline{g}}r|^2_{\overline{g}}=1.
\end{equation}

We define $\Theta_{\alpha'}$ to be the angle between $\nabla_{\overline{g}}r$ and $\mu_{\alpha'}$:
\begin{equation}
\cos \Theta_{\alpha'}:=\overline{\mu}_{\alpha'}(r)=\overline{g}(\overline{\mu}_{\alpha'},\nabla_{\overline{g}}r)=g(\mu_{\alpha'},r\nabla_{\overline{g}}r).
\end{equation}
Note that $\Theta_{\alpha'}$ is manifestly a conformal invariant. Using this, we can now rewrite $P(\nabla_{\bar g}r)$ as 
\begin{equation}
P(\nabla_{\overline{g}}r)=\sum_{\alpha'}\cos \Theta_{\alpha'}\overline{\mu}_{\alpha'}.
\end{equation} 
This gives us an interpretation of $P(\nabla_{\overline{g}}r)$ as being a vector field which contains all the information about the angles formed between $\nabla_{\overline{g}}r$ and the orthonormal frame $\overline{\mu}_{\alpha'}.$ Note that if we choose a different orthonormal frame then the individual angles $\Theta_{\alpha'}$ may change but $P(\nabla_{\overline{g}}r)$ is invariant.\par
Recall the conformal transformation law 
\begin{equation}\label{2ff-TransformationLaw}
B_{\alpha\beta\alpha'}=\frac{\overline{B}_{\alpha\beta\alpha'}}{r}+\frac{\overline{\mu}_{\alpha'}(r)\overline{h}_{\alpha\beta}}{r^2}.
\end{equation}
If we impose our assumption, $g_+(H,H)=C^2+O(r)$, then 
\begin{equation}
H=\sum_{\alpha'}H_{\alpha'}\mu_{\alpha'}
\end{equation}
with
\begin{align}
H_{\alpha'}&=h^{\alpha\beta}B_{\alpha\beta\alpha'}\\
&=r\overline{h}^{\alpha\beta}\overline{B}_{\alpha\beta\alpha'}+\overline{\mu}_{\alpha'}(r)(k+1).
\end{align}
Therefore,
\begin{equation}\label{cos-angle}
\cos \Theta_{\alpha'}=\overline{\mu}_{\alpha'}(r)=\frac{H_{\alpha'}}{k+1}+O(r),
\end{equation}
thus it follows that 
\begin{align}\label{C}
|P(\nabla_{\overline{g}}r)|^2_{\overline{g}}=\frac{C^2}{(k+1)^2}+O(r).
\end{align}
After recalling (\ref{Proj}) and taking $r\to 0$, we conclude
\begin{equation}
C^2\leq (k+1)^2.
\end{equation}

Before we start with the proof of Proposition \ref{ACM}, we explain some interesting geometric consequences of the previous formulas. First, (\ref{C}) gives us that the multi-angle vector $P(\nabla_{\overline{g}}r)$ has constant length at infinity given by $\dfrac{C}{(k+1)}$. If we use (\ref{cos-angle}) and think in terms of our frame $\overline{\mu}_{\alpha'}$, then we deduce that the angle between $\overline{\mu}_{\alpha'}$ and $\nabla_{\overline{g}}r$ at infinity is given by $\arccos\left(\frac{H_{\alpha'}}{(k+1)}\right)$. Given $q\in\partial\overline{Y}$, we say that $\Theta:=\arccos(|P(\nabla r)(q)|)$ is the \textbf{generalized non-oblique angle} at which $Y$ meets $M$ at $q.$ In the hypersurface case, this gives us the non-oblique angle between $Y$ and $M$ at $q$ in the plane determined by any choice of normal vector to $Y$ and $\nabla_{\overline{g}}r.$ This can be thought of as a generalized angle.

 In particular, asymptotically minimal submanifolds ($C=0$) meet $M$ at right angles in the sense that all $\Theta_{\alpha'}=\dfrac{\pi}{2}$ no matter which normal frame we choose, and that asymptotically CMC submanifolds meet $M$ at a constant generalized angle. Furthermore, this gives an upper bound on the possible value of the asymptotic mean curvature for asymptotically CMC submanifolds; we have seen $C^2\leq (k+1)^2$. Note that CMC conformally compact submanifolds are special cases of these, so this bound applies to them globally. This shows that the bound is determined purely by the asymptotics. If $C=k+1$, then $P(\nabla_{\overline{g}}r)|_{r=0}=\nabla_{\overline{g}}r|_{r=0}$, since $\nabla_{\overline{g}}r|_{r=0}$ is normal to $M$ at $r=0$, and, therefore, it follows that $T_qY\subset T_qM$ for all $q\in \partial{\overline{Y}}$. This violates the assumption that $Y$ meets $M$ transversely, hence it must be true that $C<(k+1).$

Now we proceed with the proof.

\begin{proof}[Proof of Proposition \ref{ACM}]
We use the Gauss Equation, together with the transformation law given in (\ref{2ff-TransformationLaw}), to obtain
\begin{align*}
R^X_{\alpha\beta\gamma\delta}&=R^Y_{\alpha\beta\gamma\delta}+(B_{\alpha\delta\alpha'}B_{\beta\gamma\alpha'}-B_{\alpha\gamma\alpha'}B_{\beta\delta\alpha'})\\
&=R^Y_{\alpha\beta\gamma\delta}+\sum_{\alpha'}\Bigg{(}\left[\frac{\overline{B}_{\alpha\delta\alpha'}}{r}+\frac{\overline{\mu}_{\alpha'}(r)\overline{h}_{\alpha\delta}}{r^2}\right]\left[\frac{\overline{B}_{\beta\gamma\alpha'}}{r}+\frac{\overline{\mu}_{\alpha'}(r)\overline{h}_{\beta\gamma}}{r^2}\right]\\ &\hspace{1in}-\left[\frac{\overline{B}_{\alpha\gamma\alpha'}}{r}+\frac{\overline{\mu}_{\alpha'}(r)\overline{h}_{\alpha\gamma}}{r^2}\right]\left[\frac{\overline{B}_{\beta\delta\alpha'}}{r}+\frac{\overline{\mu}_{\alpha'}(r)\overline{h}_{\beta\delta}}{r^2}\right]\Bigg{)}\\
&=R^Y_{\alpha\beta\gamma\delta}+\frac{|P(\nabla_{\overline{g}}r)|^2}{r^4}(\overline{h}_{\alpha\delta}\overline{h}_{\beta\gamma}-\overline{h}_{\alpha\gamma}\overline{h}_{\beta\delta})+O_{\alpha\beta\gamma\delta}(r^{-3})\\
&=R^Y_{\alpha\beta\gamma\delta}+|P(\nabla_{\overline{g}}r)|^2(h_{\alpha\delta}h_{\beta\gamma}-h_{\alpha\gamma}h_{\beta\delta})+O_{\alpha\beta\gamma\delta}(r^{-3}).
\end{align*}
It now follows from \eqref{TransformationRiemannTensor} and \eqref{C} that 
\begin{align*}
R^Y_{\alpha\beta\gamma\delta}=
&\left[\frac{C^2}{(k+1)^2}-1\right](h_{\alpha\gamma}h_{\beta\delta}-h_{\alpha\delta}h_{\beta\gamma})+O_{\alpha\beta\gamma\delta}(r^{-3})\\
&=-\left[1-\frac{C^2}{(k+1)^2}\right](h_{\alpha\gamma}h_{\beta\delta}-h_{\alpha\delta}h_{\beta\gamma})+O_{\alpha\beta\gamma\delta}(r^{-3}).
\end{align*}
It follows that 
\begin{equation}
sec^Y(X,Z)=-\bigg[1-\frac{C^2}{(k+1)^2}\bigg]+O(r),
\end{equation}
where $(X,Z)$ is any pair of vector fields on $Y$ that always span a plane and are defined in some collar neighborhood. Since $Y$ is asymptotically CMC and we know by the discussion prior to the proof that $C<k+1,$ it now follows that $Y$ is $\mathrm{AH}\left(-\left(1-\frac{C^2}{(k+1)^2}\right)\right).$
\end{proof}

\subsection{Proof of Theorem \ref{lower}}\label{LowerBound}

In \cite{MazzeoRafe1991UCaI}, Mazzeo showed that the essential spectrum of an $(n+1)$-dimensional asymptotically hyperbolic manifold $(X,g_+)$ is $[\frac{n^2}{4},\infty)$ with no embedded eigenvalues, leaving the possibility of the existence of finitely many eigenvalues in $(0,\frac{n^2}{4})$. In particular, the first Dirichlet eigenvalue could be less than $\frac{n^2}{4}$. However, a few years after, Lee proved in \cite{LeeJohnM.1995Tsoa} that if the manifold is, in addition, Poincar\'e-Einstein and its conformal infinity has nonnegative Yamabe invariant, then the first Dirichlet eigenvalue is exactly $\frac{n^2}{4}$. As a consequence, such manifolds have no $L^2$ - eigenvalues. 

Since it will be relevant to our work, in what follows we explain Lee's approach in detail. Here is where the assumption that $g_+$ is conformally compact of order $C^{3,\alpha}$ becomes necessary. Thanks to a result of Barta (see Chapter 3 in \cite{ChavelIsaac1984EiRg}), the first Dirichlet eigenvalue $\lambda_{1,2}(\Omega)$ on any bounded domain with smooth boundary satisfies
\begin{equation}
\inf_{x\in \Omega} \left(\frac{-\Delta_{g_+} f}{f}\right)\le \lambda_{1,2}(\Omega),
\end{equation}
where $f$ is a smooth positive function vanishing on $\partial \Omega$. In \cite{ChengS.Y.1975Deor} (see Section 3), the authors extended Barta's result to complete, non-compact manifolds $(X,g_+)$ and showed that 
\begin{equation}\label{Barta2}
\inf_{x\in X} \left(\frac{-\Delta_{g_+} f}{f}\right)\le \lambda_{1,2}(X),
\end{equation}
where $f$ is a smooth positive function on $X$. 

With this result at hand, Lee's idea is to construct a positive smooth test function $\varphi$ on $X$ such that $\frac{-\Delta_{g_+}\varphi}{\varphi}\ge \frac{n^2}{4}$. The test function is constructed as follows. On any asymptotically hyperbolic manifold $X$ and for any smooth defining function $r$, there exists a unique, smooth and strictly positive function $u$ on $X$ satisfying
\begin{equation}\label{Lee-Eq}
\begin{cases}
    \Delta_{g_+} u &= (n+1)u\\
    u-r^{-1}&=O(1);
\end{cases}
\end{equation}
see Proposition 4.1 in \cite{LeeJohnM.1995Tsoa} - the sign discrepancy is due to different conventions in the definition of the Laplace operator.  Notice that if $s>0$ and we take $\varphi = u^{-s}$, then 
     \begin{equation}
         \frac{-\Delta_{g_+} \varphi}{\varphi} = -\text{div}_{g_+}(-su^{-s-1}\nabla_{g_+} u) =s(n+1) -s(s+1) \frac{|\nabla_{g_+}u|^2}{u^2}.
     \end{equation}
     Therefore, if $\frac{|\nabla_{g_+}u|^2}{u^2}\le 1$, we would then obtain \begin{equation}
         \frac{-\Delta_{g_+}\varphi}{\varphi} \ge s(n+1) - s(s+1) = s(n-s).
     \end{equation}
     Noticing that $s(n-s)$, as a function of $s>0$, has a maximum at $s = \frac{n}{2}$ would imply the result. 
     
     The main difficulty is proving that \begin{equation}\label{KeyEstimate}
         \frac{|\nabla_{g_+}u|^2}{u^2}\le 1
     \end{equation}
     holds globally on $X$. It turns out that if $(X^{n+1},g_+)$ is Poincar\'e-Einstein and the conformal infinity is of non-negative Yamabe type, then we can select a smooth defining function $r$ such that $(r^2g_+)|_{TM}$ has non-negative scalar curvature. For the corresponding solution $u$, (\ref{KeyEstimate}) holds and the result follows; see Theorem A in \cite{LeeJohnM.1995Tsoa}. This motivates our following definition:

     \begin{definition}[Lee-eigenfunction]\label{Lee-Eigenfunction} Let $(X^{n+1},g_+)$ be a PE space whose conformal infinity $(M,[g_+]_\infty)$ is of non-negative Yamabe type. As explained, for a smooth defining function $r$, there exists a unique, smooth positive solution $u$ solving (\ref{Lee-Eq}). If $(r^2g_+)|_{TM}$ has non-negative scalar curvature, then we say that $u$ is a Lee-eigenfunction.
     \end{definition}

     \begin{remark}
         As pointed out by Guillarmou-Qing in \cite{GuillarmouColin2010SCoP} (see page 5), (\ref{Lee-Eq}) and (\ref{KeyEstimate}) still remain to be true if the Einstein condition is replace by $\mathrm{Ric}_{g_+}\ge -ng_+$. This means that our Theorem \ref{LowerBound}, and its consequences, can be strengthened by replacing the Poincar\'e-Einstein with such lower bound on the ricci curvature. 
     \end{remark}

     Our submanifolds are not necessarily Poincar\'e-Einstein, and we do not impose any assumptions on its conformal boundary. However, they live inside a Poincar\'e-Einstein manifold whose conformal infinity has non-negative Yamabe invariant. That means that a Lee-eigenfunction exists on the ambient manifold, and it globally satisfies (\ref{KeyEstimate}). We utilize Barta's inequality (\ref{Barta2}) and Lee's trick applied to the restriction of $u^{-s}$ to $Y$ to derive the lower bound.

\begin{proof}[Proof of Theorem \ref{lower}]
Let $u$ be a Lee-eigenfunction on $(X,g_+)$, and set $\hat u:= u|_Y$. We denote by $h_+$ the induced metric on $Y$, and we use $g^{\alpha'\beta'}$, $\alpha',\beta' = k+2,\cdots, n+1$, to denote the components of $g_+^{-1}$ with respect to a local orthonormal frame for $TY^\perp$. Recall that (see Lemma 2 in \cite{ChoeJaigyoung1992Iiom}, for instance) 
\begin{equation}
\Delta_{h_+}\hat{u}=(\Delta_{g_+}u)|_{Y}+H^{\alpha'}u_{\alpha'}-(g^{\alpha'\beta'}\nabla^2_{\alpha'\beta'} u)|_{Y},
\end{equation}
and that, for a Lee-eigenfunction $u$, its trace-free hessian (w.r.t. $g_+$) equals $b(u) = \nabla^2_{g_+}u - ug_+$. Setting $T:=\text{tr}_{g_+}\left(b(u)|_{(TY^{\perp})^2}\right)$, we can therefore write
\begin{equation}
\begin{split}
\Delta_{h_+}\hat u &=(\Delta_{g_+}u)|_{Y}+H^{\alpha'}u_{\alpha'}-T-(n-k)\hat u
\\ &=(k+1)\hat u+H^{\alpha'}u_{\alpha'}-T.
\end{split}
\end{equation}
Next, for $s>0$, consider the test function $\varphi := \hat{u}^{-s}$,
and compute 
\[
\begin{split}
\frac{-\Delta_{h_+} \varphi}{\varphi} &= s\frac{\Delta_{h_+} \hat{u}}{\hat{u}}-s(s+1)\frac{|d\hat{u}|^2_{h_+}}{\hat{u}^2}\\&=s\left(k+1+H^{\alpha'}\log(u)_{\alpha'}-\frac{T}{\hat u}\right) -s(s+1) \frac{|\nabla_{h_+} \hat{u}|^2}{\hat{u}^2}.
\end{split}
\]
Using that $\frac{|\nabla_{h_+}\hat{u}|^2}{\hat u^2}\leq \frac{|\nabla_{g_+}u|^2}{ u^2}\leq 1$ on $Y$, we derive 
\[
\begin{split}
         \frac{-\Delta_{h_+}\varphi}{\varphi} &\ge s(k+1) - s(s+1)+ sH_{\alpha'} \log(u)_{\alpha'}-s\frac{T}{\hat u}\\&= s(k-s)+sH^{\alpha'}\log(u)_{\alpha'}-s\frac{T}{\hat u}.
\end{split}
\]
Now the Cauchy-Schwarz inequality gives 
\[
H^{\alpha'}\log(u)_{\alpha'}=H^{\alpha'}\frac{u_{\alpha'}}{\hat u}\geq -\alpha \frac{|\nabla_{g_+}u|}{ u}\geq -\alpha.
\] 
Therefore,
 \begin{equation}
         \frac{-\Delta_{h_+}\varphi}{\varphi} \ge s(k-s-\alpha-\beta^Y(u)).
         \end{equation}
Observe that $s(k-s-\alpha-\beta^Y(u))$, as a function of $s>0$, has a maximum at $s = \frac{k-\alpha-\beta^Y(u)}{2}$. The result now follows from (\ref{Barta2}).
\end{proof}

\subsection{Proof of Corollary \ref{cor1}}

\begin{proof}[Proof of Corollary \ref{cor1}]
    Notice that for any Lee-eigenfunction $u$, that is, for any $u\in \mathcal{L}(g_+)$, inequality (\ref{lower-estimate1}) holds. The result follows after taking the supremum over $\mathcal{L}(g_+)$ on both sides.
\end{proof}

\subsection{Proof of Corollary \ref{cor2}}

\begin{proof}[Proof of Corollary \ref{cor2}]
Without loss of generality assume $0\notin Y.$ Let $p\in \mathbb{H}^{n+1}\setminus Y.$ Define $x\colon \mathbb{H}^{n+1}\to [0,\infty)$ by $x(y):=d_{g_H}(p,y)$ where $g_H$ is the hyperbolic metric. Then (Lemma 3 in \cite{ChoeJaigyoung1992Iiom})
\begin{equation}\label{b=0}
\nabla ^2_{g_H} \cosh(x(y))=\cosh(x(y))g_H,
\end{equation}
and so $\Delta_{g_H} \cosh(x(y)) = (n+1)\cosh(x(y))$. Let $r(y)$ be a smooth defining function for $\mathbb{S}^n$ that agrees with $1-|y|$ outside of some compact set containing $0.$ Then $\cosh(x(y)) - r^{-1}(y)$ remains bounded as $|y|\to 1$, therefore $u(y)=\cosh(x(y))$ is a Lee-eigenfunction for which $\beta^Y(u) = 0$ thanks to \eqref{b=0}. 
\end{proof}

\subsection{Stability - Proof of Corollary \ref{Stability}}
\begin{proof}[Proof of Corollary \ref{Stability}]
Let $f$ be a compactly supported Lipschitz function, and recall from Corollary \ref{ImprovedLower} in the minimal case ($\alpha = 0$) that
\begin{equation}
\frac{(n-1-\hat{\beta}^Y)^2}{4}\leq\lambda_{1,2}(Y)\leq\frac{\displaystyle\int_Y|\nabla f|^2\;dv_{h_+}}{\displaystyle\int_Yf^2\;dv_{h_+}}.
\end{equation}
Therefore,
\[
\begin{split}
\int_Y(|\nabla f|^2-(|B|^2-n)f^2)\;dv_{h_+}&\geq \int_Y\left(\frac{(n-1-\hat{\beta}^Y)^2}{4}+n - |B|^2\right)f^2\;dv_{h_+}\\&\ge 0,
\end{split}
\]
and the result follows. The second part of the statement follows from the fact that when the ambient manifold is $\mathbb{H}^{n+1}$, then $\hat\beta^Y =0$ for any such submanifold; see proof Corollary \ref{cor2}.
\end{proof}

\begin{remark}\label{Seo}
    In Seo's paper \cite{SeoKeom-Kyo2011SMHI}, the work of Cheung-Leung is quoted incorrectly. For instance, Cheung-Leung's lower bound estimate does not require stability; see Theorem 2 in \cite{CheungLeung-Fu2001Eefs}. Also, in the statement of Theorem 3.1 in \cite{SeoKeom-Kyo2011SMHI}, the norm of the second fundamental form should be squared.
\end{remark}

\subsection{Proof of Corollary \ref{GeomInt}}

\begin{proof}
    Recall that $\mathrm{Ric}(g_+) = -ng_+$, $b(u) = \nabla^2_{g_+}u - ug_+$ and $g_u = u^{-2}g_+$. Using the conformal transformation law for the Ricci tensor, we obtain
    \begin{equation}
    \begin{split}
    \text{Ric}_{g_u} &= g_+ + (n-1)u^{-1} \nabla^2_{g_+}u - nu^{-2}|\nabla_{g_+}u|^2g_+ \\ &= ng_+ + (n-1)u^{-1}b(u) - n|\nabla_{g_+}u|^2g_u.
    \end{split}
    \end{equation}
Since $\text{tr}_{g_u}(\text{Ric}_{g_u}) = R_{g_u} = n(n+1)(u^2 - |\nabla^2_{g_+}u|^2)$ and $E_{g_u} = \text{Ric}_{g_u} - \frac{R_{g_u}}{n+1}g_u$, we get
\begin{equation}
\begin{split}
    E_{g_u} &= ng_+ +(n-1)u^{-1}b(u) - n|\nabla_{g_+}u|^2g_u - ng_+ + n|\nabla_{g_+}u|^2g_u \\&= (n-1)u^{-1}b(u),
\end{split}
\end{equation}
as desired. 

For the second part of the statement, notice that now we can conclude that $b(u) = 0$ if and only if $E_{g_u}=0$, thus either condition implies that $R_{g_u}$ is constant. On the other hand, if $R_{g_u}$ is constant, then it is harmonic and using Bochner's formula we conclude that $|b(u)|^2_{g_+} = 0$. This concludes the proof.
\end{proof}

\section{Appendix: One-to-one correspondence between \texorpdfstring{$\mathcal{L}(g_+)$}{} and \texorpdfstring{$[g_+]_\infty^{Sc\geq 0}$}{}}
We refer to section \ref{LowerBound} for some of the terminology employed here.

\begin{proposition}\label{LEE}
    Let $\mathcal{L}(g_+)$ be the set of all Lee-eigenfunctions on a PE space $(X^{n+1},g_+)$ and let $[g_+]_{\infty}^{Sc\geq 0}$ be the subset of $[g_+]_{\infty}$ whose elements have non-negative scalar curvature. Then there is a $1-1$ correspondence between $\mathcal{L}(g_+)$ and $[g_+]_{\infty}^{Sc\geq 0}.$
\end{proposition}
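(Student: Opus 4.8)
The plan is to exhibit the correspondence explicitly and then check it is a bijection. Given a Lee-eigenfunction $u$, it comes (by definition) attached to a smooth defining function $r$ with the property that $r^{-1}u = 1 + O(r)$, i.e. $u^{-1}$ and $r$ are two defining functions agreeing to leading order, and that $(r^2 g_+)|_{TM}$ has nonnegative scalar curvature. Since $u^{-1}$ is itself a smooth defining function for $M$ (here one uses that $u$ is smooth, strictly positive on $X$, and $u - r^{-1} = O(1)$ together with the $C^{3,\alpha}$ regularity to see $u^{-1}$ extends smoothly with nonvanishing differential on $M$), and since $u^{-1}$ and $r$ induce conformal — in fact, because they agree to order $O(r)$, \emph{equal} — boundary metrics, we may as well use $u^{-1}$ itself as the defining function. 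So I would define the map $\Phi\colon\mathcal{L}(g_+)\to [g_+]_\infty^{Sc\geq 0}$ by $\Phi(u) := (u^{-2}g_+)|_{TM}$; by the previous sentence this equals $(r^2g_+)|_{TM}$, which has nonnegative scalar curvature by the definition of Lee-eigenfunction, so $\Phi$ is well-defined.

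Next I would construct the inverse. Given $\hat g \in [g_+]_\infty^{Sc\geq 0}$, invoke the Graham–Lee lemma (cited in the introduction: for every metric in the conformal infinity there is a special defining function $r$ with $(r^2g_+)|_{TM} = \hat g$) — actually any defining function $r$ representing $\hat g$ will do for this direction — and then apply the existence and uniqueness statement \eqref{Lee-Eq}: for this $r$ there is a unique smooth positive $u$ on $X$ with $\Delta_{g_+}u = (n+1)u$ and $u - r^{-1} = O(1)$. Since $(r^2g_+)|_{TM} = \hat g$ has nonnegative scalar curvature, this $u$ is a Lee-eigenfunction, and I set $\Psi(\hat g) := u$. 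The key point needing care is that $\Psi$ is well-defined, i.e. independent of the choice of defining function $r$ representing $\hat g$: if $r$ and $\rho$ both represent $\hat g$ on $TM$, one must show the associated solutions $u_r$ and $u_\rho$ coincide. This is where I expect the main obstacle to lie.

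For that well-definedness I would argue as follows. Both $u_r$ and $u_\rho$ solve the \emph{same} PDE $\Delta_{g_+}u = (n+1)u$ on $X$; they differ only in the asymptotic normalization. The difference $w := u_r - u_\rho$ satisfies $\Delta_{g_+}w = (n+1)w$ and, since $r^{-1} - \rho^{-1}$ is controlled by the fact that $r/\rho \to 1$ on $M$ — more precisely $\rho = r(1 + O(r))$ forces $\rho^{-1} - r^{-1} = O(1)$ — we get $w = O(1)$. But by Lee's analysis (Proposition 4.1 in \cite{LeeJohnM.1995Tsoa}, or directly from indicial-root considerations for the operator $\Delta_{g_+} - (n+1)$ on an AH manifold: the indicial roots are $-1$ and $n+1$, and a bounded solution decaying slower than $r^{n+1}$ would have to match one of these) a solution of $\Delta_{g_+}w=(n+1)w$ that is merely $O(1)$ must in fact decay, and then a maximum-principle / $L^2$ argument (the bottom of the spectrum of $\Delta_{g_+}$ is $n^2/4 < n+1$, so $\Delta_{g_+} - (n+1)$ has no kernel in the relevant space, or simply: $w$ decaying and solving this equation must vanish since $n+1 > 0$ rules out a positive bounded solution by the maximum principle applied at an interior maximum) forces $w \equiv 0$. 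Hence $\Psi$ is well-defined.

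Finally I would check $\Phi$ and $\Psi$ are mutually inverse. For $\Psi\circ\Phi = \mathrm{id}$: given $u\in\mathcal{L}(g_+)$, $\Phi(u) = (u^{-2}g_+)|_{TM}$, and since $u^{-1}$ is a defining function representing this boundary metric with $u - (u^{-1})^{-1} = 0 = O(1)$, uniqueness in \eqref{Lee-Eq} gives $\Psi(\Phi(u)) = u$. For $\Phi\circ\Psi = \mathrm{id}$: given $\hat g$, let $u = \Psi(\hat g)$ arising from defining function $r$ with $(r^2g_+)|_{TM} = \hat g$; then $u - r^{-1} = O(1)$ means $u^{-1}$ and $r$ agree to leading order, so $(u^{-2}g_+)|_{TM} = (r^2g_+)|_{TM} = \hat g$, i.e. $\Phi(\Psi(\hat g)) = \hat g$. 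This establishes the bijection.
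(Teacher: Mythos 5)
Your proposal is correct in substance and rests on the same pillars as the paper's proof (Lee's existence--uniqueness for \eqref{Lee-Eq} and the fact that two defining functions inducing the same boundary metric satisfy $\rho = r + O(r^2)$), but it packages the bijection differently and takes a heavier route at one point. The paper only constructs the single map $\hat g \mapsto u$ (your $\Psi$), checks well-definedness exactly where you do, declares surjectivity by construction, and proves injectivity by comparing asymptotics of two defining functions attached to the same $u$ (forcing the constant $c$ in $r = cr_0 + O(r_0^2)$ to be $1$); your explicit inverse $\Phi(u) = (u^{-2}g_+)|_{TM}$ together with the two composition checks is an equivalent and arguably more transparent way to encode that injectivity. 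The genuine difference is in the well-definedness of $\Psi$: the paper simply observes that $u_r - \rho^{-1} = (u_r - r^{-1}) + (r^{-1}-\rho^{-1}) = O(1)$, so $u_r$ already satisfies the defining conditions relative to $\rho$ and the uniqueness clause of \eqref{Lee-Eq} (among positive solutions) gives $u_r = u_\rho$ at once; you instead subtract the two solutions and argue that a bounded, possibly sign-changing solution of $\Delta_{g_+}w = (n+1)w$ must vanish via indicial roots plus a maximum-principle/$L^2$ argument. That Liouville statement is true, but the step ``bounded implies decaying'' is precisely the kind of asymptotic regularity that Lee's Proposition 4.1 already encapsulates, so your detour re-proves (with only a sketch) a stronger uniqueness than you need; the paper's one-line use of the stated uniqueness is cleaner and you already invoke it for $\Psi\circ\Phi = \mathrm{id}$, so you could use it here too. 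One minor imprecision: $u - r^{-1} = O(1)$ alone does not give that $u^{-1}$ extends \emph{smoothly} with nonvanishing differential (that requires Lee's finer expansion); fortunately you never really need $u^{-1}$ to be an admissible defining function, since $ru \to 1$ already yields $(u^{-2}g_+)|_{TM} = (r^2g_+)|_{TM}$, and in the check $\Psi(\Phi(u)) = u$ you may feed $\Psi$ the defining function $r$ attached to $u$ rather than $u^{-1}$.
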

\begin{proof}
Define the map $\Omega\colon [g_+]_{\infty}^{Sc\geq 0}\to\mathcal{L}(g_+)$ as follows. 
For $\hat{g}\in [g_+]_{\infty}^{Sc\geq 0}$, let $r$ be a defining function such that $(r^2g_+)|_{TM} = \hat{g}$. Set $\Omega(\hat{g}):=u$, where $u$ is the Lee-eigenfunction determined by $r$. We claim that this is independent of the choice of $r$ consider. Indeed, let $\rho$ be another defining function such that $(\rho^2g_+)|_{TM}=\hat{g}=(r^2g_+)|_{TM}$. Then $\rho=r+O(r^2)$, and so $u-r^{-1}=u-(\rho+O(r^2))^{-1}=u-\rho^{-1}+O(\rho^2).$ Since $u-r^{-1}$ is bounded as $r\to 0$, we conclude that $u-\rho^{-1}$ is also bounded as $\rho\to 0$. It follows by uniqueness that $u$ is the eigenfunction which is induced by $\rho$, showing the claim. 

The map $\Omega$ is surjective by construction. We proceed to showing that $\Omega$ is injective. To this end, suppose $\Omega(\hat{g})=\Omega(\hat{g}')=u$, and  let $r$ and $r_0$ be defining functions which correspond to $\hat{g}$ and $\hat{g}'$, respectively. Then $u-r^{-1}$ and $u-r_0^{-1}$ are both bounded as $r,r_0\to 0.$ Consequently, $(u-r^{-1})-(u-r_0^{-1})$ is bounded as $r\to 0$, which implies
$r^{-1}-r_0^{-1}$ is bounded too as $r\to 0$. We know there exists $c\in\mathbb{R}$ such that $r=cr_0+O(r_0^2).$ It follows that 
$c^{-1}r_0^{-1}-r_0^{-1}$ is bounded as $r\to 0$, therefore 
$(c^{-1}-1)r_0^{-1}$ is bounded as $r\to 0$.
It follows that $c=1$ and therefore \begin{align}(r^2g_+)|_{TM}&=([r_0+O(r_0^2)]^2g_+)|_{TM}\\\nonumber
&=(r_0^2g_+)|_{TM}.
\end{align}

\end{proof}

\bibliographystyle{amsplain}
\bibliography{bibliography.bib}

\end{document}